\pdfoutput=1
\documentclass[12pt,letter]{amsart}
\usepackage{xcolor}
\usepackage[all,color]{xy}
\usepackage{amsmath,amssymb,amsthm,amscd,amsxtra,amsfonts,mathrsfs,graphicx,enumitem,bm,slashed,array,setspace,amsfonts,cite}
\setlength\extrarowheight{5pt}
\input xy
\xyoption{all}
\newtheorem{Theorem}{Theorem}[section]
\newtheorem{Lemma}[Theorem]{Lemma}
\newtheorem{Proposition}[Theorem]{Proposition}
\newtheorem{Corollary}[Theorem]{Corollary}
\newtheorem{Example}[Theorem]{Example}
\newtheorem{Remark}[Theorem]{Remark}
\newtheorem{Definition}[Theorem]{Definition}

\newtheorem{Notation}[Theorem]{Notation}
\newtheorem{Notation/Lemma}[Theorem]{Notation/Lemma}

\newtheorem*{Theorem A}{Theorem A}

\newcommand*{\overbar}[1]{\mkern 1.5mu\overline{\mkern-1.5mu#1\mkern-1.5mu}\mkern 1.5mu}
\advance\evensidemargin-.5in
\advance\oddsidemargin-.5in
\advance\textwidth1in

\setcounter{tocdepth}{1}

\begin{document}
\author{Charlie Beil}
 \address{Institut f\"ur Mathematik und Wissenschaftliches Rechnen, Universit\"at Graz, Heinrichstrasse 36, 8010 Graz, Austria.}
 \email{charles.beil@uni-graz.at}
 \title[On the central geometry of nonnoetherian dimer algebras]{On the central geometry of\\ nonnoetherian dimer algebras}
 \keywords{Non-noetherian ring, non-noetherian geometry, dimer algebra.}
 \subjclass[2010]{16S38,14A20,13C15}
 \date{}

\begin{abstract}
Let $Z$ be the center of a nonnoetherian dimer algebra on a torus.
Although $Z$ itself is also nonnoetherian, we show that it has Krull dimension $3$, and is locally noetherian on an open dense set of $\operatorname{Max}Z$.
Furthermore, we show that the reduced center $Z/\operatorname{nil}Z$ is depicted by a Gorenstein singularity, and contains precisely one closed point of positive geometric dimension.
\end{abstract}

\maketitle

\section{Introduction}

In this article, all dimer quivers are nondegenerate and embed in a two-torus. 
A dimer algebra $A$ is noetherian if and only if its center $Z$ is noetherian, if and only if $A$ is a noncommutative crepant resolution of its 3-dimensional toric Gorenstein center \cite{B4, Br, D}.
We show that the center $Z$ of a nonnoetherian dimer algebra is also $3$-dimensional, and may be viewed as the coordinate ring for a toric Gorenstein singularity that has precisely one `smeared-out' point of positive geometric dimension.
This is made precise using the notion of a depiction, which is a finitely generated overring that is as close as possible to $Z$, in a suitable geometric sense (Definition \ref{depiction def}).

Denote by $\operatorname{nil}Z$ the nilradical of $Z$, and by $\hat{Z} := Z/\operatorname{nil}Z$ the reduced ring of $Z$.
Our main theorem is the following.

\begin{Theorem} \label{big theorem2} (Theorems \ref{generically noetherian}, \ref{hopefully...}.)
Let $A$ be a nonnoetherian dimer algebra with center $Z$, and let $\Lambda := A/\left\langle p - q \ | \ \text{$p,q$ a non-cancellative pair} \right\rangle$ be its ghor algebra with center $R$.
 \begin{enumerate}
  \item The nonnoetherian rings $Z$, $\hat{Z}$, and $R$ each have Krull dimension 3, and the integral domains $\hat{Z}$ and $R$ are depicted by the cycle algebras of $A$ and $\Lambda$.
  \item The reduced scheme of $\operatorname{Spec}Z$ and the scheme $\operatorname{Spec}R$ are birational to a noetherian affine scheme, and each contain precisely one closed point of positive geometric dimension.
\end{enumerate}
\end{Theorem}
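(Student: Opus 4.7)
The plan is to reduce both statements to an analysis of the \emph{cycle algebra} $S$ attached to $A$ (resp.\ $\Lambda$), which will play the role of the depicting overring for $\hat{Z}$ (resp.\ $R$). The cycle algebra is built from the cycles of the dimer quiver on the torus; since the torus contributes two lattice directions and the perfect matching ``height'' contributes a third, $S$ should be the coordinate ring of a $3$-dimensional affine toric Gorenstein variety, and in particular a noetherian normal domain of Krull dimension $3$. The first step is therefore to identify $\hat{Z}$ and $R$ as subrings of $S$ having the same field of fractions; birationality to a noetherian affine scheme in part~(2) then follows essentially for free.

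For part~(1), once the inclusions $\hat{Z} \hookrightarrow S$ and $R \hookrightarrow S$ are in hand, I would transfer the Krull dimension. Because $\hat{Z}$ and $R$ share a field of fractions with $S$, and $S$ is finitely generated of Krull dimension $3$, chains of primes in $\hat{Z}$ or $R$ correspond, after contraction, to chains in $S$, giving equality of Krull dimensions. The dimension $3$ for the nonreduced ring $Z$ then follows because $Z$ and $\hat{Z}$ have the same underlying space, and the nilradical does not affect Krull dimension. To verify the depiction property, I would check the conditions of Definition~\ref{depiction def}: that $S$ is finitely generated over $\hat{Z}$ (or over the base field), that the inclusion is birational, and that the maximal ideals of $\hat{Z}$ at which localization remains noetherian are precisely those admitting a unique lift to $\operatorname{Max} S$, which should form a dense open subset.

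For part~(2), the birationality is essentially encoded in the construction of $S$ as a common overring. The more delicate issue is the identification of the single closed point of positive geometric dimension. I expect this point to correspond to the maximal ideal of $\hat{Z}$ at which all non-cancellative pairs $p - q$ become identified --- intuitively, the unique ``central fiber'' where distinct perfect matchings cease to separate points. I would show that at every other closed point the fiber of $\operatorname{Max} S \to \operatorname{Max} \hat{Z}$ is a single point (so the local ring is even noetherian there, recovering the openness in part~(1)), while at this distinguished point the fiber is positive-dimensional precisely because the nondegenerate dimer structure forces an entire family of perfect matchings to collapse.

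The main obstacle will be the uniqueness claim in part~(2). It requires leveraging the full strength of the nondegeneracy hypothesis on the dimer quiver, together with the combinatorial structure of non-cancellative pairs, to show that all ``smearing'' is concentrated at a single maximal ideal rather than spread across several. I anticipate this step will rely on a careful case analysis of how non-cancellative cycles meet the perfect matching locus, mirroring the dichotomy between cancellative (noetherian) and non-cancellative (nonnoetherian) behavior established in the author's earlier work, together with the toric structure of $S$ that forces any positive-dimensional fiber to be cut out by a single monomial ideal.
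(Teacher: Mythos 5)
Your high-level architecture matches the paper's: realize $\hat{Z}$ and $R$ inside the cycle algebra $S$, prove $\operatorname{dim}S=3$, and show $S$ depicts both rings with all nonnoetherian behaviour concentrated at one closed point. However, two steps at the technical heart of the argument are missing or misdirected. First, your dimension transfer ("chains of primes in $\hat{Z}$ or $R$ correspond, after contraction, to chains in $S$") and your claim that a one-point fiber of $\operatorname{Max}S\to\operatorname{Max}\hat{Z}$ forces the local ring to be noetherian both get the logic backwards. Equality of fraction fields does not preserve Krull dimension under contraction: in this very situation the positive-dimensional locus $\mathcal{Z}_S(\mathfrak{m}_0S)$ contracts to the single maximal ideal $\mathfrak{m}_0$, so chains of distinct primes of $S$ do collapse in $R$. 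What is actually needed (Proposition \ref{n in maxs}) is the equality of localizations $\hat{Z}_{\mathfrak{n}\cap\hat{Z}}=S_{\mathfrak{n}}$ for every $\mathfrak{n}$ not lying over $\mathfrak{z}_0$; the hard inclusion $S\subseteq\hat{Z}_{\mathfrak{m}}$ is proved by an explicit cycle computation (choosing a central monomial $\overbar{q}\notin\mathfrak{m}$ with $\sigma\nmid\overbar{q}$, intersecting lifts of cycles in the cover $Q^+$, and expressing an arbitrary monomial generator of $S$ as $\sigma^{\ell}\,\overbar{t}\,\overbar{q}^{-1}$ with $\overbar{t}\in\hat{Z}$). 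Only once this open locus $U_{S/R}$ is known to be nonempty do the equalities of fraction fields, the equality of Krull dimensions, and the depiction property follow.

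Second, part (2) is asserted rather than proved. You need (a) that $\hat{Z}_{\mathfrak{z}_0}$ and $R_{\mathfrak{m}_0}$ are genuinely nonnoetherian, which the paper derives from the existence of a monomial $h\in S\setminus R$ with $h^n\notin R$ for all $n\geq 1$ (Lemma \ref{cyclelemma2}.6) via a nonstabilizing chain $(h_1)\subseteq(h_1,h_2)\subseteq\cdots$ in $R_{\mathfrak{m}_0}$; and (b) that $\mathfrak{m}_0$ has positive geometric dimension, i.e.\ that some \emph{non-maximal} prime of $S$ contracts to $\mathfrak{m}_0$. Your heuristic that "an entire family of perfect matchings collapses" does not produce such a prime. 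The paper's construction shows $\overbar{p}\notin\mathfrak{m}_0S$, picks for each $c\in k$ a maximal ideal $\mathfrak{n}_c\supseteq(\overbar{p}-c,\mathfrak{m}_0)S$ lying over $\mathfrak{m}_0$, and uses the uncountability of $k$ together with Lasker--Noether applied to $\bigcap_{c\in k}\mathfrak{n}_c$ to extract a non-maximal minimal prime $\mathfrak{q}_1$ with $\mathfrak{q}_1\cap R=\mathfrak{m}_0$. This is precisely where nonnoetherianity of the dimer algebra and the standing hypothesis that $k$ is uncountable enter; without an argument of this type the existence half of "precisely one closed point of positive geometric dimension" remains unproved.
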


Dimer models were introduced in string theory in 2005 in the context of brane tilings \cite{HK, FHVWK}.
The dimer algebra description of the combinatorial data of a brane tiling arose from the notion of a superpotential algebra (or quiver with potential), which was introduced a few years earlier in \cite{BD}.
Stable (i.e., `superconformal') brane tilings quickly made their way to the mathematics side, but the more difficult study of unstable brane tilings was largely left open, in regards to both their mathematical and physical properties. 
There were two main difficulties: in contrast to the stable case, the `mesonic chiral ring' (closely related to what we call the cycle algebra\footnote{The mesonic chiral ring is the ring of gauge invariant operators; these are elements of the dimer algebra that are invariant under isomorphic representations, and thus are cycles in the quiver.}) did not coincide with the center of the dimer algebra, and (ii) although the mesonic chiral ring still appeared to be a nice ring, the center certainly was not.

The center is supposed to be the coordinate ring for an affine patch on the extra six compact dimensions of spacetime, the so-called (classical) vacuum geometry.
But examples quickly showed that the center of an unstable brane tiling could be infinitely generated.
To say that the vacuum geometry was a nonnoetherian scheme -- something believed to have no visual representation  or concrete geometric interpretation -- was not quite satisfactory from a physics perspective. 
However, unstable brane tilings are physically allowable theories.
To make matters worse, almost all brane tilings are unstable, and it is only in the case of a certain uniform symmetry (an `isoradial embedding') that they become stable.
Moreover, in the context of 11-dimensional M-theory, stable and unstable brane tilings are equally `good'.
The question thus remained:
\begin{center}
\textit{What does the vacuum geometry of an unstable brane tiling look like?}
\end{center}
The aim of this article is to provide an answer.
In short, the vacuum geometry of an unstable brane tiling looks just like the vacuum geometry of a stable brane tiling, namely a $3$-dimensional complex cone, except that there is precisely one curve or surface passing through the apex of the cone that is identified as a single `smeared-out' point.

\section{Preliminary definitions}

Throughout, $k$ is an uncountable algebraically closed field.
Given a quiver $Q$, we denote by $kQ$ the path algebra of $Q$; and by $Q_0$ and $Q_1$ the sets of vertices and arrows of $Q$ respectively. 
The vertex idempotent at vertex $i \in Q_0$ is denoted $e_i$, and the head and tail maps are denoted $\operatorname{h},\operatorname{t}: Q_1 \to Q_0$.
By monomial, we mean a nonconstant monomial.

\subsection{Dimer algebras, ghor algebras, and cyclic contractions}

\begin{Definition} \label{dimer def} \rm{ \

$\bullet$ Let $Q$ be a finite quiver whose underlying graph $\overbar{Q}$ embeds into a real two-torus $T^2$, such that each connected component of $T^2 \setminus \overbar{Q}$ is simply connected and bounded by an oriented cycle of length at least $2$, called a \textit{unit cycle}.\footnote{In forthcoming work, we consider the nonnoetherian central geometry of ghor algebras on higher genus surfaces.  Dimer quivers on other surfaces arise in contexts such as Belyi maps, cluster categories, and bipartite field theories; see e.g., \cite{BGH, BKM, FGU}.} 
The \textit{dimer algebra} of $Q$ is the quiver algebra $A := kQ/I$ with relations
$$I := \left\langle p - q \ | \ \exists \ a \in Q_1 \text{ such that } pa \text{ and } qa \text{ are unit cycles} \right\rangle \subset kQ,$$
where $p$ and $q$ are paths.

Since $I$ is generated by certain differences of paths, we may refer to a path modulo $I$ as a \textit{path} in the dimer algebra $A$.

$\bullet$ Two paths $p,q \in A$ form a \textit{non-cancellative pair} if $p \not = q$, and there is a path $r \in kQ/I$ such that
$$rp = rq \not = 0 \ \ \text{ or } \ \ pr = qr \not = 0.$$
$A$ and $Q$ are called \textit{non-cancellative} if there is a non-cancellative pair; otherwise they are called \textit{cancellative}.

$\bullet$ The \textit{ghor algebra} of $Q$ is the quotient
$$\Lambda := A/\left\langle p - q \ | \ p,q \text{ is a non-cancellative pair} \right\rangle.$$ 
A dimer algebra $A$ coincides with its ghor algebra if and only if $A$ is cancellative, if and only if $A$ is noetherian, if and only if $\Lambda$ is noetherian \cite[Theorem 1.1]{B4}.

$\bullet$ Let $A$ be a dimer algebra with quiver $Q$.
 \begin{itemize}
  \item[--] A \textit{perfect matching} $D \subset Q_1$ is a set of arrows such that each unit cycle contains precisely one arrow in $D$.
  \item[--] A \textit{simple matching} $D \subset Q_1$ is a perfect matching such that $Q \setminus D$ supports a simple $A$-module of dimension $1^{Q_0}$ (that is, $Q \setminus D$ contains a cycle that passes through each vertex of $Q$).
 Denote by $\mathcal{S}$ the set of simple matchings of $A$.
  \item[--] $A$ is said to be \textit{nondegenerate} if each arrow of $Q$ belongs to a perfect matching.\footnote{For our purposes, it suffices to assume that each cycle contains an arrow that belongs to a perfect matching; see \cite{B1}.}
 \end{itemize}

Each perfect matching $D$ defines a map
$$n_D: Q_{\geq 0} \to \mathbb{Z}_{\geq 0}$$
that sends path $p$ to the number of arrow subpaths of $p$ that are contained in $D$.
$n_D$ is additive on concatenated paths, and if $p,p' \in Q_{\geq 0}$ are paths satisfying $p + I = p' + I$, then $n_D(p) = n_D(p')$.
In particular, $n_D$ induces a well-defined map on the paths of $A$.
}\end{Definition}

Now consider dimer algebras $A = kQ/I$ and $A' = kQ'/I'$, and suppose $Q'$ is obtained from $Q$ by contracting a set of arrows $Q_1^* \subset Q_1$ to vertices.
This contraction defines a $k$-linear map of path algebras
$$\psi: kQ \to kQ'.$$
If $\psi(I) \subseteq I'$, then $\psi$ induces a $k$-linear map of dimer algebras, called a \textit{contraction},
$$\psi: A \to A'.$$

Denote by
$$B := k\left[x_D : D \in \mathcal{S}' \right]$$
the polynomial ring generated by the simple matchings $\mathcal{S}'$ of $A'$.
To each path $p \in A'$, associate the monomial
$$\bar{\tau}(p) := \prod_{D \in \mathcal{S}'} x_D^{n_D(p)} \in B.$$
For each $i,j \in Q'_0$, this association extends to a $k$-linear map 
$$\bar{\tau}: e_jA'e_i \to B.$$
This map is an algebra homomorphism if $i = j$, and injective if $A'$ is cancellative \cite[Proposition 4.29]{B2}.
For each $i,j \in Q_0$, we also consider the $k$-linear map given by the composition of the $k$-linear maps $\psi$ and $\bar{\tau}$,
$$\bar{\tau}_{\psi}: e_jAe_i \stackrel{\psi}{\longrightarrow} e_{\psi(j)}A'e_{\psi(i)} \stackrel{\bar{\tau}}{\longrightarrow} B.$$ 
Given $p \in e_jAe_i$ and $q \in e_{\ell}A'e_k$, we will write
$$\overbar{p} := \bar{\tau}_{\psi}(p) := \bar{\tau}(\psi(p)) \ \ \ \text{ and } \ \ \ \overbar{q} := \bar{\tau}(q).$$

$\psi$ is called a \textit{cyclic contraction} if $A'$ is cancellative and
\begin{equation*} \label{cycle algebra}
S := k \left[ \cup_{i \in Q_0} \bar{\tau}_{\psi}(e_iAe_i) \right] = k \left[ \cup_{i \in Q'_0} \bar{\tau}(e_iA'e_i) \right] =: S'.
\end{equation*}
In this case, we call $S$ the \textit{cycle algebra} of $A$.
The cycle algebra is independent of the choice of cyclic contraction $\psi$ \cite[Theorem 3.14]{B3}, and is isomorphic to the center of $A'$ \cite[Theorem 1.1.3]{B2}.
Moreover, every nondegenerate dimer algebra admits a cyclic contraction \cite[Theorem 1.1]{B1}.

In addition to the cycle algebra, the \textit{ghor center} of $A$,
$$R := k\left[ \cap_{i \in Q_0} \bar{\tau}_{\psi}(e_iAe_i) \right],$$
also plays an important role. 
This algebra is isomorphic to the center of the ghor algebra $\Lambda$ of $Q$ \cite[Theorem 1.1.3]{B2}.

\begin{Notation} \rm{
Let $\pi: \mathbb{R}^2 \rightarrow T^2$ be a covering map such that for some $i \in Q_0$,
$$\pi\left(\mathbb{Z}^2 \right) = i.$$
Denote by $Q^+ := \pi^{-1}(Q) \subset \mathbb{R}^2$ the covering quiver of $Q$.
For each path $p$ in $Q$, denote by $p^+$ the unique path in $Q^+$ with tail in the unit square $[0,1) \times [0,1) \subset \mathbb{R}^2$ satisfying $\pi(p^+) = p$. 
For $u \in \mathbb{Z}^2$, denote by $\mathcal{C}^u$ the set of cycles $p$ in $A$ such that
$$\operatorname{h}(p^+) = \operatorname{t}(p^+) + u \in Q_0^+.$$
}\end{Notation}

\begin{Notation/Lemma} \label{sigmalemma} \rm{
We denote by $\sigma_i \in A$ the unique unit cycle (modulo $I$) at $i \in Q_0$, and by $\sigma$ the monomial
$$\sigma := \overbar{\sigma}_i = \prod_{D \in \mathcal{S}'}x_D.$$
The sum $\sum_{i \in Q_0} \sigma_i$ is a central element of $A$.
}\end{Notation/Lemma}

\begin{Lemma} \label{cyclelemma} \
\begin{enumerate}
 \item If $p \in \mathcal{C}^{(0,0)}$ is nontrivial, then
$\overbar{p} = \sigma^n$ for some $n \geq 1$.
 \item Let $u \in \mathbb{Z}^2$ and $p,q \in \mathcal{C}^u$.
Then $\overbar{p} = \overbar{q} \sigma^n$ for some $n \in \mathbb{Z}$. 
\end{enumerate}
\end{Lemma}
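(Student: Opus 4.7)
The plan is to reduce both claims to a comparison of path-multiplicities $n_D$ across the simple matchings $D \in \mathcal{S}'$, working throughout inside the cancellative dimer algebra $A'$ via the cyclic contraction $\psi$. The technical engine is the following cancellativity consequence, which I would establish first: if $c_1, c_2 \in e_j A' e_j$ are two cycles in $A'$ whose lifts $c_1^+, c_2^+$ in $Q'^+$ share the same tail-head displacement $u \in \mathbb{Z}^2$, then there exist $m_1, m_2 \in \mathbb{Z}_{\geq 0}$ with $c_1 \sigma_j^{m_1} = c_2 \sigma_j^{m_2}$ in $A'$. To see this, I would choose $m_1, m_2$ so that the concatenated lifts $c_1^+(\sigma_j^+)^{m_1}$ and $c_2^+(\sigma_j^+)^{m_2}$ have equal length in $Q'^+$ (possible because the dimer relations preserve length, so lengths of cycles at $j$ with homology class $u$ are determined modulo $|\sigma_j|$); they already share endpoints by hypothesis, so cancellativity of $A'$ forces them to coincide in $A'$. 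Applying $\bar\tau$, which is a ring homomorphism on $e_j A' e_j$ with $\bar\tau(\sigma_j) = \sigma$, then yields $\overbar{c_1}\sigma^{m_1} = \overbar{c_2}\sigma^{m_2}$ in $B$, so $\overbar{c_1} = \overbar{c_2}\sigma^{m_2-m_1}$.

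For part (1) I would apply this to $c_1 = \psi(p)$ and $c_2 = e_{\psi(i)}$, both cycles at $\psi(i)$ with displacement $(0,0)$, obtaining $\overbar{p} = \sigma^n$ for $n = m_2 - m_1$. To upgrade this to $n \geq 1$, I would argue that a nontrivial cycle $p \in A$ with lift displacement $(0,0)$ must map to a nontrivial cycle $\psi(p) \in A'$: otherwise $\overbar{p} = 1$ would sit in $k \cap S$ rather than in a positive-degree piece, which is compatible with the equality $S = S'$ built into the definition of a cyclic contraction only if $\psi(p)$ is the vertex path, and the structural constraint that $\psi$ contracts at most one arrow per unit cycle prevents this when $p$ is a nontrivial closed lift. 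Nontriviality of $\psi(p)$ then forces the length identity $|\psi(p)| + m_1|\sigma_j| = m_2|\sigma_j|$ to have $m_2 > m_1$, giving $n \geq 1$.

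For part (2), $p$ and $q$ sit at possibly distinct vertices $i_p, i_q$, so I would bridge them by choosing any path $r \in e_{i_q}Ae_{i_p}$, which exists by strong connectedness of $Q$ (a consequence of nondegeneracy). Then $pr$ and $rq$ are both paths from $i_p$ to $i_q$ in $A$, and their images $\psi(pr)^+, \psi(rq)^+$ in $Q'^+$ share endpoints, since each corresponds to the initial lift translated by $u$ plus the displacement of $r^+$. Running the length-matching and cancellativity argument above, but now for paths between two fixed lifts rather than cycles at a single vertex, gives $\overbar{pr} = \overbar{rq}\,\sigma^n$ for some $n \in \mathbb{Z}$. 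Multiplicativity of $\bar\tau$ on concatenations together with the fact that $\overbar{r}$ is a nonzero monomial in $B$ then allows cancellation of $\overbar{r}$, yielding $\overbar{p} = \overbar{q}\,\sigma^n$.

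The main obstacle I anticipate is the careful promotion of \emph{equality of lifts} with common endpoints and common length to an \emph{equality in $A'$}: in the cancellative setting this is the F-term-equivalence property of cancellative dimer algebras on the torus, but spelling it out cleanly requires invoking the structure of the dimer relations together with the (standard) observation that length is a well-defined invariant on paths of a dimer algebra. A secondary but genuine subtlety, as noted above, is the $n \geq 1$ claim in part (1), which uses that $\psi$ is a \emph{cyclic} contraction and not merely a contraction of dimer algebras.
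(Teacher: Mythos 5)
The paper does not actually prove this lemma; it cites \cite[Lemma 5.2.1]{B2} for (1) and \cite[Lemma 5.2]{B6} for (2), so you are attempting from scratch something the paper outsources. Your overall architecture (pass to $A'$, compare cycles with a common lifted displacement, pad with unit cycles, and for (2) bridge two base vertices by a connecting path $r$ and cancel $\overbar{r}$ in the domain $B$) is reasonable, but two steps are genuinely broken. First, your length-matching device rests on the claim that ``the dimer relations preserve length'' and that lengths of cycles in a fixed homology class are determined modulo $|\sigma_j|$. This is false in general: the relation $p=q$ comes from two unit cycles $pa$ and $qa$ lying on either side of the arrow $a$, and these two faces need not have the same number of edges, so $|p|\neq|q|$ is possible and path length is not an invariant of $A'$. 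The quantities that \emph{are} conserved are the matching numbers $n_D$ (since every unit cycle meets every perfect matching exactly once, $n_D(p)=n_D(q)$ for related $p,q$, and $n_D(\sigma_j)=1$); any grading argument must be run with these, not with length. Second, the central claim that two cycles in $A'$ with lifts sharing endpoints become equal in $A'$ after padding with unit cycles is attributed to ``cancellativity forces them to coincide.'' Cancellativity is only the left/right cancellation property $rp=rq\neq 0\Rightarrow p=q$; the statement you need is the consistency/F-term-equivalence theorem for cancellative dimer algebras on a torus, which is a substantial result (essentially \cite[Lemma 4.18]{B2} and its relatives in Broomhead and Davison), not a formal consequence of the definition. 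You flag this as a presentational obstacle, but it is the mathematical heart of the matter, and as written the argument is circular: you would be using the conclusion of the consistency theorem to prove a special case of it.

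There is also a cleaner route that avoids equality in $A'$ altogether, since the lemma only concerns $\bar{\tau}$-images. For two perfect matchings $D,D'$ of $Q'$, the difference $n_D(c)-n_{D'}(c)$ on a closed cycle $c$ is an intersection number with the $1$-cycle determined by $D\triangle D'$ and hence depends only on the homology class of $c$. Thus for $p,q\in\mathcal{C}^u$ the difference $n_D(\psi(p))-n_D(\psi(q))$ is independent of $D\in\mathcal{S}'$, which is exactly $\overbar{p}=\overbar{q}\sigma^{n}$ with $n\in\mathbb{Z}$; part (1) is the case where $q$ is trivial, with $n\geq 1$ following once one knows that $\psi$ cannot contract a nontrivial cycle to a vertex (the contracted arrows contain no cycle) and that every arrow of the cancellative algebra $A'$ lies in some simple matching, so $\overbar{p}\neq 1$. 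Your remarks on the $n\geq 1$ issue point in the right direction but would need these two facts made precise rather than the informal appeal to ``at most one contracted arrow per unit cycle.''
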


\begin{proof}
(1) is \cite[Lemma 5.2.1]{B2}, and (2) is \cite[Lemma 4.18]{B2}.
\end{proof}

\subsection{Nonnoetherian geometry: depictions and geometric dimension}

Let $S$ be an integral domain and a finitely generated $k$-algebra, and let $R$ be a (possibly nonnoetherian) subalgebra of $S$.
Denote by $\operatorname{Max}S$, $\operatorname{Spec}S$, and $\operatorname{dim}S$ the maximal spectrum (or variety), prime spectrum (or affine scheme), and Krull dimension of $S$ respectively; similarly for $R$.
For a subset $I \subset S$, set $\mathcal{Z}_S(I) := \left\{ \mathfrak{n} \in \operatorname{Max}S \ | \ \mathfrak{n} \supseteq I \right\}$.

\begin{Definition} \label{depiction def} \rm{\cite[Definition 3.1]{B5}
\begin{itemize}
 \item We say $S$ is a \textit{depiction} of $R$ if the morphism\footnote{The morphism $\iota_{S/R}$ is well-defined: Suppose $\mathfrak{q} \in \operatorname{Spec}S$.  
Then $S/\mathfrak{q}$ is an integral domain.
Whence the subalgebra $R/(\mathfrak{q} \cap R) \subseteq S/\mathfrak{q}$ is an integral domain.
Thus $\mathfrak{q} \cap R$ is a prime ideal of $R$.}
$$\iota_{S/R}: \operatorname{Spec}S \rightarrow \operatorname{Spec}R, \ \ \ \ \mathfrak{q} \mapsto \mathfrak{q} \cap R,$$
is surjective, and
\begin{equation} \label{condition}
\left\{ \mathfrak{n} \in \operatorname{Max}S \ | \ R_{\mathfrak{n}\cap R} = S_{\mathfrak{n}} \right\} = \left\{ \mathfrak{n} \in \operatorname{Max}S \ | \ R_{\mathfrak{n} \cap R} \text{ is noetherian} \right\} \not = \emptyset.
\end{equation}
 \item The \textit{geometric height} of $\mathfrak{p} \in \operatorname{Spec}R$ is the minimum
$$\operatorname{ght}(\mathfrak{p}) := \operatorname{min} \left\{ \operatorname{ht}_S(\mathfrak{q}) \ | \ \mathfrak{q} \in \iota^{-1}_{S/R}(\mathfrak{p}), \ S \text{ a depiction of } R \right\}.$$
The \textit{geometric dimension} of $\mathfrak{p}$ is
$$\operatorname{gdim} \mathfrak{p} := \operatorname{dim}R - \operatorname{ght}(\mathfrak{p}).$$
 \end{itemize}
} \end{Definition}

We will denote the subsets (\ref{condition}) of the algebraic variety $\operatorname{Max}S$ by
\begin{align} \label{U U*}
\begin{split}
U_{S/R} & := \left\{ \mathfrak{n} \in \operatorname{Max}S \ | \ R_{\mathfrak{n}\cap R} = S_{\mathfrak{n}} \right\},\\ U^*_{S/R} & := \left\{ \mathfrak{n} \in \operatorname{Max}S \ | \ R_{\mathfrak{n} \cap R} \text{ is noetherian} \right\}.
\end{split}
\end{align}
The subset $U_{S/R}$ is open in $\operatorname{Max}S$ \cite[Proposition 2.4.2]{B5}.

\begin{Example} \rm{
Let $S = k[x,y]$, and consider its nonnoetherian subalgebra
$$R = k[x,xy,xy^2, \ldots] = k + xS.$$
$R$ is then depicted by $S$, and the closed point $xS \in \operatorname{Max}R$ has geometric dimension 1 \cite[Proposition 2.8]{B5}.
Furthermore, $U_{S/R}$ is the complement of the line
$$\mathcal{Z}(x) = \left\{ x = 0 \right\} \subset \operatorname{Max}S.$$
In particular, $\operatorname{Max}R$ may be viewed as 2-dimensional affine space $\mathbb{A}_k^2 = \operatorname{Max}S$ with the line $\mathcal{Z}(x)$ identified as a single `smeared-out' point.
From this perspective, $xS$ is a positive dimensional point of $\operatorname{Max}R$.
}\end{Example}

In the next section, we will show that the reduced center and ghor center of a nonnoetherian dimer algebra are both depicted by its cycle algebra, and both contain precisely one point of positive geometric dimension. 

\section{Proof of main theorem}

Throughout, $A$ is a nonnoetherian dimer algebra with center $Z$ and reduced center $\hat{Z} := Z/\operatorname{nil}Z$.
By assumption $A$ is nondegenerate, and thus there is a cyclic contraction $\psi: A \to A'$ to a noetherian dimer algebra $A'$ \cite[Theorem 1.1]{B1}.

The center $Z'$ of $A'$ is isomorphic to the cycle algebra $S$ \cite[Theorem 1.1.3]{B2}, and the reduced center $\hat{Z}$ of $A$ is isomorphic to a subalgebra of $R$ \cite[Theorem 4.1]{B6}.\footnote{It is often the case that $\hat{Z}$ is isomorphic to $R$; an example where $\hat{Z} \not \cong R$ is given in \cite[Example 4.3]{B6}.} 
We may therefore write
\begin{equation} \label{ZKA}
\hat{Z} \subseteq R.
\end{equation}

The following structural results will be useful.

\begin{Lemma} \label{cyclelemma2}
Let $g \in B$ be a monomial.
\begin{enumerate}[resume]
 \item If $g \in R$, $h \in S$ are monomials and $g \not = \sigma^n$ for each $n \geq 0$, then $gh \in R$.
 \item If $g \in R$ and $\sigma \nmid g$, then $g \in \hat{Z}$.
 \item If $g \in R$, then there is some $m \geq 1$ such that $g^m \in \hat{Z}$.
 \item If $g \in S$, then there is some $m \geq 0$ such that for each $n \geq 1$, $g^n \sigma^m \in \hat{Z}$.
 \item If $g \sigma \in S$, then $g \in S$.
 \item If a monomial $h \in S \setminus R$ satisfies $\sigma \nmid h$, then $h^n \not \in R$ for each $n \geq 1$.  Furthermore, a monomial $h \in S \setminus R$ exists for which $\sigma \nmid h$.
\end{enumerate}
\end{Lemma}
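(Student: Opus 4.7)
The plan is to use the non-cancellative structure of $A$ to exhibit such an $h$, and then to invoke parts (1) and (5) of the present lemma to control the $\sigma$-divisibility condition and the persistence $h^n \notin R$.

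Since $A$ is nonnoetherian it is non-cancellative, so there exist distinct paths $p \neq q \in e_jAe_i$ with $\bar{\tau}_\psi(p) = \bar{\tau}_\psi(q)$. I would lift such a pair to the covering quiver $Q^+$ and, by analysing how non-cancellative paths obstruct certain translation cycles at specific vertices, identify a vertex $i^\star \in Q_0$ together with a homology class $u \in \mathbb{Z}^2 \setminus \{(0,0)\}$ such that no cycle of $A$ based at $i^\star$ has class in the positive ray $\mathbb{Z}_{>0}u$. This combinatorial ``blocking'' step is the substantive input of the proof, and is where I would expect almost all of the effort to lie; I would hope to extract it from the structural analyses already present in \cite{B5, B6}.

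Granted such $i^\star$ and $u$, choose a cycle $c$ of class $u$ at any vertex where one exists (one does, because $\psi$ is cyclic, so every class realised by a cycle of $A'$ is realised by a cycle of $A$), and set $m := \bar{\tau}_\psi(c) \in S$. Since the homology class of a cycle in $A$ is determined by its $\bar{\tau}_\psi$-image---via the cancellativity of $A'$ applied to the image under $\psi$, which preserves the class---any cycle at $i^\star$ with $\bar{\tau}_\psi$-image $m$ would have class $u$; by blocking no such cycle exists, so $m \notin R$. Now write $m = \sigma^a h$ with $\sigma \nmid h$; iterating part (5) gives $h \in S$. If $h = 1$ then $m = \sigma^a \in R$, a contradiction; if $h \neq 1$ and $h \in R$, then $h \neq \sigma^n$ for every $n \geq 0$ (since $\sigma \nmid h$), so part (1) with $g = h$ and multiplicand $\sigma^a \in S$ yields $m = \sigma^a h \in R$, again a contradiction. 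Hence $h \in S \setminus R$ and $\sigma \nmid h$.

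For the persistence clause, suppose $h^n \in R$ for some $n \geq 1$; then some cycle $c'$ at $i^\star$ satisfies $\bar{\tau}_\psi(c') = h^n$. By the same class-determination argument applied to $h^n$, the class of $c'$ must equal that of the $n$-fold concatenation $c^n$, namely $nu \in \mathbb{Z}_{>0}u$, contradicting the blocking property of $i^\star$. Thus the main obstacle lies entirely in the combinatorial input of the second paragraph; once a blocked vertex $i^\star$ and ray $\mathbb{Z}_{>0}u$ have been produced from the non-cancellative data, the statement of (6) follows cleanly from parts (1) and (5) of the present lemma together with the class-determination argument.
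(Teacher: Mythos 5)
Your argument for part (6) rests entirely on the ``blocking'' claim in your second paragraph --- the existence of a vertex $i^\star$ and a nonzero class $u$ such that no cycle of $A$ at $i^\star$ has class in $\mathbb{Z}_{>0}u$ --- and this claim is both unproved and, as formulated, false. Every arrow $a$ of a dimer quiver lies on a unit cycle, so there is a path from $\operatorname{h}(a)$ back to $\operatorname{t}(a)$; hence $Q$ is strongly connected, and so is the cover $Q^+$ (unit cycles are null-homotopic and lift to closed cycles). Consequently, for \emph{every} vertex $i$ and \emph{every} $u\in\mathbb{Z}^2$ the set $\mathcal{C}^u\cap e_iAe_i$ is nonempty (compare the choice of cycles (\ref{uv}) in the proof of Lemma \ref{Jess}). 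The genuine obstruction at a ``bad'' vertex $i^\star$ is therefore not the absence of cycles of class $nu$, but the fact that every such cycle has $\bar{\tau}_{\psi}$-image of the form $\sigma^{b}h^{n}$ with $b\geq 1$: by Lemma \ref{cyclelemma}.2, images of cycles in a fixed class differ only by powers of $\sigma$. But by your own observation that a monomial lies in $R$ exactly when it is the image of a cycle at every vertex, this corrected obstruction is precisely the assertion $h^n\notin R$, i.e.\ the statement being proved. Once the blocking claim is repaired, your reduction becomes circular, and the substantive content of (6) --- which the paper obtains from \cite[Proposition 3.13]{B4} --- is still missing.

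The surrounding bookkeeping is essentially correct: the class-determination argument (via cancellativity of $A'$ and the fact that $\psi$ preserves homotopy classes), and the use of parts (1) and (5) to strip powers of $\sigma$ off a monomial in $S\setminus R$, work as you describe, modulo the small point that in the persistence step the relevant images differ by $\sigma^{an}$, so you must first multiply by unit cycles before invoking class-determination. Note also that the paper disposes of the entire lemma by citation --- (1) is \cite[Lemma 6.1]{B6}, (2)--(4) are \cite[Lemma 5.3]{B6}, (5) is \cite[Lemma 4.18]{B2} together with Lemma \ref{cyclelemma}.1, and (6) is \cite[Proposition 3.13]{B4} --- so parts (1)--(5), which you invoke as inputs but do not address, also remain unestablished in your write-up.
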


\begin{proof}
 (1) is \cite[Lemma 6.1]{B6}; (2) - (4) is \cite[Lemma 5.3]{B6}; and (5) is \cite[Lemma 4.18]{B2} for $u \not = (0,0)$, and Lemma \ref{cyclelemma}.1 for $u = (0,0)$; and (6) is \cite[Proposition 3.14]{B4}.
\end{proof}

\begin{Lemma} \label{S noetherian}
The cycle algebra $S$ is a finite type integral domain. 
\end{Lemma}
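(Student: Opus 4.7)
The plan is to derive both conclusions from the isomorphism $S \cong Z'$ established in \cite[Theorem 1.1.3]{B2}, where $Z' = Z(A')$ is the center of the cancellative dimer algebra $A'$ produced by the cyclic contraction $\psi: A \to A'$.

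For the integral domain property, I would argue directly from the definition of $S$: by construction, $S$ is a $k$-subalgebra of the polynomial ring $B = k[x_D : D \in \mathcal{S}']$, generated by the monomials $\bar{\tau}(p)$ associated to cycles $p \in e_iA'e_i$ for $i \in Q'_0$. Since $B$ is an integral domain and this property passes to $k$-subalgebras, $S$ is an integral domain.

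For finite generation, I would invoke the cited isomorphism $S \cong Z'$ and then appeal to the structural theorem for cancellative dimer algebras recalled in the introduction. Because $\psi$ is a cyclic contraction, $A'$ is cancellative by definition, hence noetherian, and by \cite{B4, Br, D} its center $Z'$ is realized as the coordinate ring of a $3$-dimensional affine toric Gorenstein variety. Coordinate rings of affine toric varieties are finitely generated $k$-algebras by classical toric geometry: the relevant semigroup is the intersection of a rational polyhedral cone with a lattice, and Gordan's lemma provides a finite generating set. Transporting finite generation across the isomorphism $S \cong Z'$ yields the claim.

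There is no substantial obstacle in this lemma; it is essentially a bookkeeping consequence of results already cited in the excerpt, and I would expect the proof in the paper to be a short chain of citations assembling the isomorphism $S \cong Z'$ with the noetherian/toric structure of $Z'$.
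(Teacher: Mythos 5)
Your proof is correct, and the integral-domain half coincides with the paper's (both observe that $S$ is a subalgebra of the polynomial ring $B$). For finite generation, however, you take a genuinely different route: you transport the property across the isomorphism $S \cong Z'$ and then invoke the toric Gorenstein structure of the center of a cancellative dimer algebra together with Gordan's lemma. The paper instead gives a direct combinatorial argument entirely inside $Q$: the cycle algebra is generated by the $\bar{\tau}_{\psi}$-images of cycles with no nontrivial cyclic proper subpaths, and since such a cycle cannot revisit a vertex, there are only finitely many of them in the finite quiver $Q$; multiplicativity of $\bar{\tau}$ on concatenation then gives a finite generating set. Your chain of citations is logically sound (the isomorphism $S \cong Z'$ and the structure of $Z'$ are both established in external references, so there is no circularity within the paper), but it leans on considerably heavier machinery; the paper's argument is elementary, self-contained, and in the spirit of the author's explicit preference for independent proofs, as seen just afterwards where a new proof of $\operatorname{dim}S = 3$ is given rather than simply citing $S \cong Z'$.
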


\begin{proof}
The cycle algebra $S$ is generated by the $\bar{\tau}_{\psi}$-images of cycles in $Q$ with no nontrivial cyclic proper subpaths.
Since $Q$ is finite, there is only a finite number of such cycles.
Therefore $S$ is a finitely generated $k$-algebra.
$S$ is also an integral domain since it is a subalgebra of the polynomial ring $B$. 
\end{proof}

It is well-known that the Krull dimension of the center of any cancellative dimer algebra (on a torus) is $3$ (e.g.\ \cite{Br}). 
The isomorphism $S \cong Z'$ therefore implies that the Krull dimension of the cycle algebra $S$ is $3$. 
In the following we give a new and independent proof of this result.

\begin{Lemma} \label{Jess}
The cycle algebra $S$ has Krull dimension $3$.
\end{Lemma}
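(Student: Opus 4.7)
The plan is to identify $S$ as a monomial subring of $B = k[x_D : D \in \mathcal{S}']$ and compute $\dim S$ as the rank of the associated lattice of exponent vectors.

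Since $\bar\tau$ sends each path to a monomial, $S$ is a semigroup algebra $k[M]$, where $M \subset \mathbb Z_{\geq 0}^{\mathcal{S}'}$ is the sub-semigroup of exponent vectors of the monomials $\bar\tau(p)$ as $p$ ranges over cycles in $A'$. Combined with Lemma \ref{S noetherian}, a standard fact about affine semigroup algebras identifies $\dim S$ with $\operatorname{rank}(G)$, where $G := \mathbb Z M \subset \mathbb Z^{\mathcal{S}'}$. So it suffices to show $\operatorname{rank}(G) = 3$.

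For the upper bound, note that $\sigma$ contributes the exponent vector $\mathbf 1 \in \mathbb Z^{\mathcal{S}'}$, and $\mathbb Z\mathbf 1$ has rank $1$. For each homology class $u \in \mathbb Z^2$ realized by some cycle in $A'$, Lemma \ref{cyclelemma}(2) asserts that all exponent vectors of $\bar\tau(\mathcal C^u)$ agree modulo $\mathbb Z\mathbf 1$. Since cycles in two classes $u, u'$ may be concatenated at a common vertex to form a cycle in class $u + u'$ (using that every class is realized at every vertex of the cancellative dimer $Q'$), the induced class assignment is additive and extends to a surjective group homomorphism $\mathbb Z^2 \twoheadrightarrow G/\mathbb Z\mathbf 1$. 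Hence $\operatorname{rank}(G/\mathbb Z\mathbf 1) \leq 2$ and $\operatorname{rank}(G) \leq 3$.

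For the lower bound, I exhibit cycles $p_1, p_2$ in linearly independent classes $u_1, u_2 \in \mathbb Z^2$; such cycles exist because $Q'$ embeds in $T^2$ with simply connected faces, forcing the semigroup of realized cycle classes to be $2$-dimensional. To verify that $\mathbf 1, v_{p_1}, v_{p_2}$ are linearly independent in $G$, I use two $\mathbb Z$-linear functionals of the form $\ell_j = n_{D_j} - n_{D_j'}$ for carefully chosen pairs of simple matchings $(D_j, D_j')$. Each $\ell_j$ vanishes on $\mathbf 1$ (since $n_D(\sigma) = 1$ for every $D$), and the values $\ell_j(v_p)$ recover the homology class $u_p$ under a suitable identification $\mathbb Z^2 \cong \mathbb Z^2$. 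Thus $(\ell_1, \ell_2)(v_{p_1})$ and $(\ell_1, \ell_2)(v_{p_2})$ are linearly independent in $\mathbb Z^2$, forcing $\mathbf 1, v_{p_1}, v_{p_2}$ to be independent in $G$.

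The main obstacle is the construction of the class-detecting functionals $\ell_j$: one must find pairs of simple matchings whose symmetric differences realize two linearly independent classes in $H_1(T^2; \mathbb Z)$, and verify the identification $\ell_j(v_p) = (u_p)_j$ from the definition of $n_D$. This is essentially a statement about the matching polytope of the cancellative dimer $A'$ being $2$-dimensional, which is where the nondegeneracy hypothesis on $A$ plays its decisive role.
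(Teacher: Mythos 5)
Your reduction of $\dim S$ to the rank of the exponent lattice $G=\mathbb{Z}M$ is sound, and your upper bound $\operatorname{rank}(G)\leq 3$ is correct and is essentially the paper's step (ii) in disguise: Lemma \ref{cyclelemma}.2 says all cycles in a fixed class $u$ have the same exponent vector modulo $\mathbb{Z}\mathbf{1}$, which is exactly what the paper uses to get $S[\sigma^{-1}]=T[\sigma^{-1}]$. The problem is the lower bound, which you yourself flag as ``the main obstacle'' and then do not carry out. The existence of two pairs of simple matchings $(D_j,D_j')$ whose differences realize linearly independent classes in $H_1(T^2;\mathbb{Z})$ is not a routine verification and is not a consequence of nondegeneracy of $A$; it is equivalent to the two-dimensionality of the matching polygon (toric diagram) of the \emph{cancellative} algebra $A'$, which is itself a consistency-type theorem requiring the machinery of zigzag paths or an equivalent argument. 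As written, your proof establishes only $\operatorname{rank}(G)\leq 3$, and the half that needs the real input is missing.

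The paper closes this gap differently, and you could import its mechanism directly into your lattice framework. Fix cycles $s_1\in\mathcal{C}'^{(1,0)}$, $t_1\in\mathcal{C}'^{(-1,0)}$, $s_2\in\mathcal{C}'^{(0,1)}$, $t_2\in\mathcal{C}'^{(0,-1)}$ at a common vertex. Lemma \ref{cyclelemma}.1 gives $\overbar{s}_i\overbar{t}_i=\sigma^{n_i}$, i.e.\ $v_{s_i}+v_{t_i}=n_i\mathbf{1}$, and the cited fact \cite[Lemma 3.9]{B4} (for cancellative $A'$, $\overbar{p}=\overbar{q}$ forces equal homology classes) shows these are the only relations. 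Concretely, given $a\mathbf{1}+bv_{s_1}+cv_{s_2}=0$, substitute $v_{s_i}=n_i\mathbf{1}-v_{t_i}$ wherever a coefficient is negative to obtain an identity of monomials $\overbar{p}=\overbar{q}$ with $p$ a cycle of class $(\pm b,\pm c)$ and $q$ a power of $\sigma$ of class $(0,0)$; then $b=c=0$ and hence $a=0$. This yields $\operatorname{rank}(G)\geq 3$ without any appeal to the matching polytope. (The paper packages the same computation as the assertion that $0\subset(\sigma,\overbar{s}_1,\overbar{s}_2)\subset(\sigma,\overbar{s}_1,\overbar{s}_2,\overbar{t}_1)\subset(\sigma,\overbar{s}_1,\overbar{s}_2,\overbar{t}_1,\overbar{t}_2)$ is a maximal chain of distinct primes of $T$, so $\dim T=3$.) Until you either prove the two-dimensionality of the matching polygon or substitute an argument of this kind, the proof is incomplete.
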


\begin{proof}
Fix $j \in Q'_0$ and cycles in $e_jA'e_j$,
\begin{equation} \label{uv}
s_1 \in \mathcal{C}'^{(1,0)}, \ \ \ \ t_1 \in \mathcal{C}'^{(-1,0)}, \ \ \ \ s_2 \in \mathcal{C}'^{(0,1)}, \ \ \ \ t_2 \in \mathcal{C}'^{(0,-1)}.
\end{equation}
Consider the algebra
$$T := k[\sigma, \overbar{s}_1,\overbar{s}_2,\overbar{t}_1,\overbar{t}_2 ] \subseteq S' \stackrel{\textsc{(i)}}{=} S,$$
where (\textsc{i}) holds since the contraction $\psi$ is cyclic. 

Since $A'$ is cancellative, if 
$$p \in \mathcal{C}'^u \ \ \ \text{ and } \ \ \ q \in \mathcal{C}'^v$$ 
are cycles in $A'$ satisfying $\overbar{p} = \overbar{q}$, then $u = v$ \cite[Lemma 3.9]{B4}.
Thus there are no relations among the monomials $\overbar{s}_1,\overbar{s}_2,\overbar{t}_1,\overbar{t}_2$, by our choice of cycles (\ref{uv}).
However, by Lemma \ref{cyclelemma}.1, there are integers $n_1,n_2 \geq 1$ such that
\begin{equation} \label{siti}
\overbar{s}_1 \overbar{t}_1 = \sigma^{n_1} \ \ \ \text{ and } \ \ \ \overbar{s}_2 \overbar{t}_2 = \sigma^{n_2}.
\end{equation}

(i) We claim that $\operatorname{dim}T = 3$.
Since $T$ is a finite type integral domain and $k$ is algebraically closed, the variety $\operatorname{Max}T$ is equidimensional \cite[Ch.\ 13, Theorem A]{E}.
It thus suffices to show that the chain of ideals of $T$,
\begin{equation} \label{hope}
0 \subset (\sigma, \overbar{s}_1,\overbar{s}_2) \subseteq (\sigma, \overbar{s}_1, \overbar{s}_2, \overbar{t}_1) \subseteq (\sigma, \overbar{s}_1,\overbar{s}_2,\overbar{t}_1,\overbar{t}_2 ),
\end{equation}
is a maximal chain of distinct primes. 

The inclusions in (\ref{hope}) are strict since the relations among the monomial generators are generated by the two relations (\ref{siti}).

Moreover, (\ref{hope}) is a maximal chain of primes of $T$:
Suppose $\overbar{s}_i$ is in a prime $\mathfrak{p}$ of $T$. 
Then $\sigma$ is in $\mathfrak{p}$, by (\ref{siti}).
Whence $\overbar{s}_{i+1}$ or $\overbar{t}_{i+1}$ is also in $\mathfrak{p}$, again by (\ref{siti}).
Thus $(\sigma, \overbar{s}_1,\overbar{s}_2)$ is a minimal prime of $T$.

(ii) We now claim that $\operatorname{dim}S = \operatorname{dim}T$.
By Lemma \ref{cyclelemma}.2, we have 
$$S[\sigma^{-1}] = T[\sigma^{-1}].$$
Furthermore, $S$ and $T$ are finite type integral domains, by Lemma \ref{S noetherian}. 
Thus $\operatorname{Max}S$ and $\operatorname{Max}T$ are irreducible algebraic varieties that are isomorphic on their open dense sets $\{\sigma \not = 0\}$. 
Therefore $\operatorname{dim}S = \operatorname{dim}T$.
\end{proof}

\begin{Corollary}
The Krull dimension of the center of a noetherian dimer algebra is $3$.
\end{Corollary}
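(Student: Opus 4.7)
The plan is to obtain the corollary as an essentially immediate consequence of Lemma \ref{Jess}, via the structural identification $S \cong Z'$ already recorded earlier in the excerpt.

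First I would observe that if $A'$ is a noetherian dimer algebra, then $A'$ is cancellative, by the equivalences listed in Definition \ref{dimer def} (a dimer algebra coincides with its homotopy algebra iff it is cancellative iff it is noetherian). Consequently, the identity map $\psi = \mathrm{id}_{A'} : A' \to A'$ qualifies as a cyclic contraction: the required equality $S = S'$ from the definition of cyclic contraction holds tautologically, since $\bar\tau_{\psi} = \bar\tau$ when $\psi = \mathrm{id}$. So $A'$ has a well-defined cycle algebra, which is simply $S = S'$.

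Next I would invoke the isomorphism $S \cong Z'$ supplied by \cite[Theorem 1.1.3]{B2} and recorded in the excerpt right after the introduction of the cycle algebra. This gives $\dim Z' = \dim S$. Applying Lemma \ref{Jess}, which computes $\dim S = 3$, yields $\dim Z' = 3$, completing the proof.

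There is essentially no obstacle beyond Lemma \ref{Jess} itself; the corollary is a straight translation of that dimension calculation through a structural isomorphism. This is precisely the logical direction signposted by the remark preceding Lemma \ref{Jess}, namely that the lemma constitutes ``a new and independent proof'' of the classical fact that the center of a noetherian (equivalently, cancellative) dimer algebra on the torus is $3$-dimensional.
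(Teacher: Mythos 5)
Your proposal is correct and follows exactly the paper's own one-line argument: the corollary ``follows from Lemma \ref{Jess} and the isomorphism $Z' \cong S$.'' Your additional observation that a noetherian dimer algebra is cancellative, so that the identity contraction makes the cycle algebra well-defined, is a reasonable spelling-out of why the cited isomorphism applies.
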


\begin{proof}
Follows from Lemma \ref{Jess} and the isomorphism $Z' \cong S$.
\end{proof}

\begin{Lemma} \label{max ideal}
The morphisms
\begin{equation} \label{max surjective}
\begin{array}{rcl}
\kappa_{S/\hat{Z}}: \operatorname{Max}S \to \operatorname{Max}\hat{Z}, & \ & \mathfrak{n} \mapsto \mathfrak{n} \cap \hat{Z},\\
\kappa_{S/R}: \operatorname{Max}S \to \operatorname{Max}R, & & \mathfrak{n} \mapsto \mathfrak{n} \cap R,
\end{array}
\end{equation}
and
$$\begin{array}{rcl}
\iota_{S/\hat{Z}}: \operatorname{Spec}S \to \operatorname{Spec}\hat{Z}, & \  & \mathfrak{q} \mapsto \mathfrak{q} \cap \hat{Z},\\
\iota_{S/R}: \operatorname{Spec}S \to \operatorname{Spec}R, & & \mathfrak{q} \mapsto \mathfrak{q} \cap R,
\end{array}$$
are well-defined and surjective.
\end{Lemma}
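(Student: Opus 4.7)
My plan is to handle all four maps in parallel, with $T \in \{\hat{Z}, R\}$ playing the role of the target ring, using Lemma \ref{S noetherian} (so that $S$ is a finitely generated integral domain over the algebraically closed field $k$) together with Lemma \ref{cyclelemma2}(4) as the main tools.

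For \emph{well-definedness} of the $\iota$ maps, there is nothing to check: contractions of prime ideals along a ring extension are always prime. For the $\kappa$ maps I would invoke the Nullstellensatz, which gives $S/\mathfrak{n} \cong k$ for every $\mathfrak{n} \in \operatorname{Max}S$. Since $k \subseteq T \subseteq S$, the composition $T \hookrightarrow S \twoheadrightarrow S/\mathfrak{n} = k$ is surjective, so its kernel $\mathfrak{n} \cap T$ is a maximal ideal of $T$.

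For \emph{surjectivity}, the crucial ingredient is the localization identity
\[
\hat{Z}[\sigma^{-1}] \;=\; R[\sigma^{-1}] \;=\; S[\sigma^{-1}],
\]
which follows from Lemma \ref{cyclelemma2}(4): every monomial $g \in S$ satisfies $g\sigma^m \in \hat{Z}$ for some $m$, hence $g \in \hat{Z}[\sigma^{-1}]$, and since $S$ is spanned by monomials we obtain $S \subseteq \hat{Z}[\sigma^{-1}]$. To prove $\iota_{S/T}$-surjectivity I would reduce to showing $\mathfrak{p}S \cap T = \mathfrak{p}$ for each $\mathfrak{p} \in \operatorname{Spec}T$: given $f = \sum p_i s_i \in \mathfrak{p}S \cap T$, Lemma \ref{cyclelemma2}(4) provides $m$ with $\sigma^m s_i \in R$ for all $i$, and hence $\sigma^m f \in \mathfrak{p}$. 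When $\sigma \notin \mathfrak{p}$, primality forces $f \in \mathfrak{p}$, and then any prime of $S$ minimal over $\mathfrak{p}S$ contracts to $\mathfrak{p}$.

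The hard part will be the case $\sigma \in \mathfrak{p}$, where the above chain of implications breaks. I would first use Lemma \ref{cyclelemma2}(4) to show $\sigma S \cap T \subseteq \sqrt{\sigma T} \subseteq \mathfrak{p}$: for $\sigma s \in T$ and $n$ sufficiently large, $(\sigma s)^n = \sigma \cdot \sigma^{n-1-m} \cdot (s^n \sigma^m) \in \sigma T$. Hence $\mathfrak{p}$ corresponds to a prime of the finitely generated $k$-algebra $T/(\sigma S \cap T)$, which includes into $S/\sigma S$; a noetherian argument on this quotient inclusion, combined with the isomorphism on the open dense locus $\{\sigma \neq 0\}$ already handled, should produce a prime of $S$ containing $\sigma$ and contracting to $\mathfrak{p}$. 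Finally, $\kappa_{S/T}$-surjectivity follows from $\iota_{S/T}$-surjectivity by extending any lifted prime $\mathfrak{q}$ to a maximal ideal $\mathfrak{n} \in \operatorname{Max}S$ via Zorn's lemma; the contraction $\mathfrak{n} \cap T \supseteq \mathfrak{q} \cap T = \mathfrak{p}$ is then forced to equal $\mathfrak{p}$ by the maximality of $\mathfrak{p}$ in $T$.
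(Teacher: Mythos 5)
Your treatment of well-definedness is fine and agrees with the paper's (Nullstellensatz for $\kappa$, contraction of primes for $\iota$), and your handling of primes $\mathfrak{p}$ with $\sigma \notin \mathfrak{p}$ is a correct use of Lemma \ref{cyclelemma2}.4: from $\sigma^m f \in \mathfrak{p}$ and primality you get $\mathfrak{p}S \cap T = \mathfrak{p}$, whence lying over. (One small slip there: an \emph{arbitrary} prime minimal over $\mathfrak{p}S$ need not contract to $\mathfrak{p}$; you must take one disjoint from $T \setminus \mathfrak{p}$, e.g.\ by localizing at $T \setminus \mathfrak{p}$ first, or argue via prime avoidance that \emph{some} minimal prime works.)

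The genuine gap is the case $\sigma \in \mathfrak{p}$, which you flag as ``the hard part'' but do not close --- and it is exactly where all the content of the lemma lives, since $\operatorname{Spec}T$ and $\operatorname{Spec}S$ already agree on the locus $\sigma \neq 0$, while $\mathfrak{z}_0$, $\mathfrak{m}_0$, and every prime witnessing nonnoetherianity contain $\sigma$. Your sketch asserts that $T/(\sigma S \cap T)$ is a finitely generated $k$-algebra, which is unjustified (it is precisely the sort of finiteness that fails for $T$ near $\mathfrak{m}_0$), and then appeals to ``a noetherian argument'' that ``should produce a prime''; but surjectivity of $\operatorname{Spec}(S/\sigma S) \to \operatorname{Spec}\bigl(T/(\sigma S \cap T)\bigr)$ for a possibly nonnoetherian subring of a noetherian ring is the original problem all over again. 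The paper avoids this by going in the opposite direction: it first proves surjectivity of $\kappa_{S/T}$ on maximal ideals (for $\mathfrak{m} \in \operatorname{Max}T$ the extension $S\mathfrak{m}$ is a proper ideal of the noetherian ring $S$, hence contained in some $\mathfrak{n} \in \operatorname{Max}S$, and maximality of $\mathfrak{m} = \mathfrak{n} \cap T$ forces equality), and then transfers from $\operatorname{Max}$ to $\operatorname{Spec}$ via \cite[Lemma 3.6]{B5}, a step that uses the standing hypothesis that $k$ is uncountable. As written, your proof establishes surjectivity only over the complement of $\mathcal{Z}(\sigma)$; to complete it you must either prove $\mathfrak{p}S \cap T = \mathfrak{p}$ for primes containing $\sigma$ or import such a Max-to-Spec transfer.
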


\begin{proof}
(i) We first claim that $\kappa_{S/\hat{Z}}$ and $\kappa_{S/R}$ are well-defined maps.
Indeed, let $\mathfrak{n}$ be in $\operatorname{Max}S$.
By Lemma \ref{S noetherian}, $S$ is of finite type, and by assumption $k$ is algebraically closed.
Therefore the intersections $\mathfrak{n} \cap \hat{Z}$ and $\mathfrak{n} \cap R$ are maximal ideals of $\hat{Z}$ and $R$ respectively (e.g., \cite[Lemma 2.1]{B5}).

(ii) We claim that $\kappa_{S/\hat{Z}}$ and $\kappa_{S/R}$ are surjective.
Fix $\mathfrak{m} \in \operatorname{max}\hat{Z}$.
Then $S\mathfrak{m}$ is a proper ideal of $S$ since $S$ is a subalgebra of the polynomial ring $B$.
Thus, since $S$ is noetherian, there is a maximal ideal $\mathfrak{n} \in \operatorname{Max}S$ containing $S\mathfrak{m}$.
Whence,
$$\mathfrak{m} \subseteq S\mathfrak{m} \cap \hat{Z} \subseteq \mathfrak{n} \cap \hat{Z}.$$
But $\mathfrak{n} \cap \hat{Z}$ is a maximal ideal of $\hat{Z}$ by Claim (i).
Therefore $\mathfrak{m} = \mathfrak{n} \cap \hat{Z}$.
Similarly, $\kappa_{S/R}$ is surjective.

(iii) It is clear that $\iota_{S/\hat{Z}}$ and $\iota_{S/R}$ are well-defined maps (see footnote 4).
Finally, we claim that $\iota_{S/\hat{Z}}$ and $\iota_{S/R}$ are surjective.
By \cite[Lemma 3.6]{B5}, if $D$ is a finitely generated algebra over an uncountable field $k$, and $C \subseteq D$ is a subalgebra, then $\iota_{D/C}: \operatorname{Spec}D \to \operatorname{Spec}C$ is surjective if and only if $\kappa_{D/C}:\operatorname{Max}D \to \operatorname{Max}C$ is surjective.
Therefore, $\iota_{S/\hat{Z}}$ and $\iota_{S/R}$ are surjective by Claim (ii).
\end{proof}

\begin{Lemma} \label{sigma in m}
If $\mathfrak{p} \in \operatorname{Spec}\hat{Z}$ contains a monomial, then $\mathfrak{p}$ contains $\sigma$.
\end{Lemma}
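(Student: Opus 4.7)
My plan is to reduce to the winding-zero case and apply Lemma \ref{cyclelemma}.1 in $A'$. First I would verify that $\sigma$ itself lies in $\hat{Z}$: the element $\sum_{i \in Q_0} \sigma_i \in A$ is central and its $\bar\tau_\psi$-image at every vertex equals $\sigma$, which is nonzero (hence non-nilpotent) in the polynomial ring $B$; thus $\sigma$ descends to a nonzero element of $\hat{Z} \subseteq R \subseteq S \subseteq B$, so the desired conclusion is at least meaningful.

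Now let $g \in \mathfrak{p}$ be a (non-constant) monomial, so $g \in \hat{Z} \subseteq S$. As an element of $S$, $g$ equals $\bar\tau$ of some cycle in $A'$ and therefore carries a well-defined winding $u \in \mathbb{Z}^2$. If $u = (0,0)$, Lemma \ref{cyclelemma}.1 forces $g = \sigma^n$ for some $n \geq 1$, and primeness of $\mathfrak{p}$ yields $\sigma \in \mathfrak{p}$.

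If instead $u \neq (0,0)$, the goal is to produce a monomial $h \in S$ of winding $-u$ such that $gh = \sigma^N$ for some $N \geq 1$. Since $A'$ is cancellative, the proof of Lemma \ref{Jess} exhibits cycles of windings $(\pm 1, 0)$ and $(0, \pm 1)$, and appropriate products of these supply a cycle, hence a monomial $h \in S$, of winding $-u$. The product $gh \in S$ is then a monomial of winding $0$, and by Lemma \ref{cyclelemma}.1 applied to a cycle realizing $gh$ it must equal $\sigma^N$. Now Lemma \ref{cyclelemma2}.4 supplies $m \geq 0$ with $h \sigma^m \in \hat{Z}$, and since $g \in \mathfrak{p}$ we obtain $\sigma^{N+m} = g(h\sigma^m) \in \mathfrak{p}$, whence $\sigma \in \mathfrak{p}$ by primeness.

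The main obstacle I anticipate is establishing $gh = \sigma^N$, i.e., that every monomial in $S$ of winding $0$ is a power of $\sigma$. The subtlety is that $g$ and $h$ may be $\bar\tau$-images of cycles at different vertices, so $gh$ is not literally the $\bar\tau$ of a concatenation in $A'$. One must first translate one cycle to the vertex of the other, which Lemma \ref{cyclelemma}.2 permits at the cost of a power of $\sigma$; this extra $\sigma$-factor only shifts $N$, and Lemma \ref{cyclelemma}.1 then closes the argument.
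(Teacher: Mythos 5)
Your proof is correct, and its core mechanism is the same as the paper's: complete the given monomial to a winding-zero cycle, invoke Lemma \ref{cyclelemma}.1 to identify the completed cycle's $\bar{\tau}$-image with a power of $\sigma$, and finish by primeness. The two arguments diverge in the bookkeeping. To produce the complementary factor, the paper simply takes a return path $q^+$ from $\operatorname{h}(p^+)$ to $\operatorname{t}(p^+)$, so that $\overbar{p}\,\overbar{q}=\sigma^n$ immediately; your route through the generators $\overbar{s}_i,\overbar{t}_i$ from the proof of Lemma \ref{Jess}, followed by a translation via Lemma \ref{cyclelemma}.2, reaches the same identity but tacitly needs the (true, standard) fact that a cycle of winding $-u$ exists at the relevant basepoint --- exactly what the return path provides for free. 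The more substantive difference is the concluding step: the complementary factor lies only in $S$, not in $\hat{Z}$, so one cannot multiply inside $\mathfrak{p}$ directly. The paper resolves this by lifting $\mathfrak{p}$ to a prime of $S$ (surjectivity from Lemma \ref{max ideal}), deducing $\sigma$ lies in that prime, and intersecting back down to $\hat{Z}$; you instead push the factor $h$ into $\hat{Z}$ by multiplying by $\sigma^m$ (Lemma \ref{cyclelemma2}.4), so that $\sigma^{N+m}=g\cdot(h\sigma^m)\in\mathfrak{p}$ and primeness of $\mathfrak{p}$ itself finishes. Your version is marginally more self-contained, avoiding the lying-over statement entirely; the paper's is shorter because it never needs to adjust $h$. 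Both are valid, and neither introduces circularity, since Lemmas \ref{cyclelemma2} and \ref{max ideal} both precede this lemma.
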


\begin{proof}
Suppose $\mathfrak{p}$ contains a monomial $g$.
Then there is a nontrivial cycle $p$ such that $\overbar{p} = g$.

Let $q^+$ be a path from $\operatorname{h}(p^+)$ to $\operatorname{t}(p^+)$.
Then the concatenated path $(pq)^+$ is a cycle in $Q^+$.
Thus, there is some $n \geq 1$ such that $\overbar{p}\overbar{q} = \sigma^n$, by Lemma \ref{cyclelemma}.1.

By Lemma \ref{max ideal}, there is a prime ideal $\mathfrak{q} \in \operatorname{Max}S$ such that $\mathfrak{q} \cap R = \mathfrak{p}$.
Furthermore, $\overbar{p}  \overbar{q} = \sigma^n$ is in $\mathfrak{q}$ since $\overbar{p} \in \mathfrak{p}$ and $\overbar{q} \in S$.
Whence $\sigma$ is also in $\mathfrak{q}$ since $\mathfrak{q}$ is prime.
But $\sigma \in \hat{Z}$. 
Therefore $\sigma \in \mathfrak{q} \cap \hat{Z} = \mathfrak{p}$.
\end{proof}

Denote the origin of $\operatorname{Max}S$ by
$$\mathfrak{n}_0  := \left( \overbar{s} \in S \ | \ s \text{ a nontrivial cycle} \right)S \in \operatorname{Max}S.$$
Consider the maximal ideals of $\hat{Z}$ and $R$ respectively,
$$\mathfrak{z}_0 := \mathfrak{n}_0 \cap \hat{Z} \ \ \text{ and } \ \ \mathfrak{m}_0 := \mathfrak{n}_0 \cap R.$$

\begin{Proposition} \label{local nonnoetherian}
The localizations $\hat{Z}_{\mathfrak{z}_0}$ and $R_{\mathfrak{m}_0}$ are nonnoetherian.
\end{Proposition}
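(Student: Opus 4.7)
The plan is to produce an infinite strictly ascending chain of ideals in each localization. Using Lemma \ref{cyclelemma2}.6, fix a monomial $h \in S \setminus R$ with $\sigma \nmid h$ and $h^n \notin R$ for every $n \geq 1$. By Lemma \ref{cyclelemma2}.4, there is an integer $m \geq 1$ (necessarily $\geq 1$, else $h^n \in \hat{Z} \subseteq R$ would contradict our choice of $h$) such that $\sigma^m h^n \in \hat{Z}$ for all $n \geq 1$. Consider the ascending chain
$$I_n := (\sigma^m h,\ \sigma^m h^2,\ \ldots,\ \sigma^m h^n) \subseteq \hat{Z}_{\mathfrak{z}_0}$$
and the analogous chain in $R_{\mathfrak{m}_0}$. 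I claim $\sigma^m h^{n+1} \notin I_n$ in either localization, which immediately forces nonnoetherianness.

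Suppose for contradiction that $\sigma^m h^{n+1} \in I_n \hat{Z}_{\mathfrak{z}_0}$. Clearing denominators and cancelling $\sigma^m$ in the integral domain $S$ produces a relation
$$s h^{n+1} = \sum_{i=1}^n r_i h^i \qquad \text{in } S \subseteq B,$$
with $s \in \hat{Z} \setminus \mathfrak{z}_0$ and $r_i \in \hat{Z}$. The plan is to equate coefficients of the single monomial $h^{n+1}$ on both sides, using that the monomials of $B$ form a $k$-basis.

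On the left, the coefficient of $h^{n+1}$ is the constant term of $s$ in $B$, which is nonzero because $\mathfrak{n}_0$ is the augmentation ideal of $S$, so $\mathfrak{z}_0 = \mathfrak{n}_0 \cap \hat{Z}$ consists exactly of the elements of $\hat{Z}$ with zero constant term in $B$. On the right, the coefficient of $h^{n+1}$ in $r_i h^i$ equals the coefficient of the monomial $h^{n+1-i}$ in $r_i$. The crucial observation is that $R = k[\cap_i \bar{\tau}_{\psi}(e_i A e_i)]$ is generated as a $k$-algebra by monomials of $B$, hence is spanned as a $k$-vector space by those monomials, so the $B$-monomial expansion of any element of $R$ (in particular any element of $\hat{Z} \subseteq R$) is supported only on monomials that themselves lie in $R$. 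Since $h^{n+1-i} \notin R$ for $1 \leq i \leq n$ by our choice of $h$, every coefficient on the right vanishes, contradicting the nonzero left-hand side. The same argument gives the result for $R_{\mathfrak{m}_0}$, with $R$ and $\mathfrak{m}_0$ in place of $\hat{Z}$ and $\mathfrak{z}_0$. The main obstacle is essentially combinatorial, namely identifying the right chain and the correct single-monomial comparison; once these are in place, the only algebraic input needed is that $R$ is monomially spanned.
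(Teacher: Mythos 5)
Your proof is correct and follows essentially the same route as the paper: the same monomial $h$ from Lemma \ref{cyclelemma2}.6, the same multiplication by $\sigma^m$ via Lemma \ref{cyclelemma2}.4 to land in $\hat{Z}$ (resp.\ $R$), the same ascending chain, and the same underlying mechanism of comparing monomial coefficients in the polynomial ring $B$ using that $R$ is spanned by monomials lying in $R$ and that elements outside $\mathfrak{z}_0$ (resp.\ $\mathfrak{m}_0$) have nonzero constant term. The only cosmetic difference is that the paper first isolates the intermediate claim that $h^j \notin R_{\mathfrak{m}_0}$ and derives the contradiction from it, whereas you fold that step into a single direct coefficient comparison on $h^{n+1}$.
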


\begin{proof}
There is a monomial $g \in S \setminus R$ such that $g^n \not \in R$ for each $n \geq 1$, by Lemma \ref{cyclelemma2}.6.

(i) Fix $n \geq 1$.  
We claim that $g^n$ is not in the localization $R_{\mathfrak{m}_0}$.
Assume otherwise; then there is an $a \in R$, $b \in \mathfrak{m}_0$, $\beta \in k^{\times}$, such that
$$g^n = \frac{a}{\beta + b} \in R_{\mathfrak{m}_0},$$
since $\mathfrak{m}_0$ is a maximal ideal of $R$.
Whence,
\begin{equation} \label{keeptry}
b g^n - a = -\beta g^n \not \in R.
\end{equation}
Thus $b g^n \not \in R$ since $a \in R$ and $\beta g^n \not \in R$.

Since $\mathfrak{m}_0$ is generated by monomials in $R$, we may write the polynomial $b \in \mathfrak{m}_0$ as a sum of monomials in $\mathfrak{m}_0$,
$$b = \sum_j b_j + \sum_k b'_k,$$
where for each $j$ and $k$, 
$$b_j g^n \not \in R \ \ \ \text{ and } \ \ \ b'_k g^n \in R.$$
Consider the polynomial
$$h := \sum_j b_jg^n + \beta g^n = a - \sum_k b'_k g^n.$$

If $h = 0$, then $\sum_j b_j g^n = - \beta g^n$.
Whence $\sum_j b_j = -\beta$ since $B$ is an integral domain.
But $\sum_j b_j$ is in $\mathfrak{m}_0$ whereas $\beta \in k^{\times}$ is not, a contradiction.
Therefore $h \not = 0$.

View $R \subset S$ as $k$-vector subspaces of the polynomial ring $B$.
These subspaces have bases given by all monomials with scalar coefficient $1$ in $R$ and $S$ respectively. 
Define a symmetric bilinear form on $S$: for monomials $m_1, m_2 \in S$ in the basis, set
$$(m_1,m_2) := \left\{ \begin{array}{cl} 1 & \text{ if } m_1 = m_2\\ 0 & \text{ otherwise } \end{array} \right.,$$
and extend $k$-bilinearly to $S$.

Now $h$ is nonzero and equals $\sum_j b_j g^n + \beta g^n$, and thus may be written entirely in terms of basis vectors that lie in the orthogonal complement to $R$ (with respect to $(\cdot, \cdot)$).
Thus $h$ itself does not lie in $R$.
But $h$ also equals $a - \sum_k b'_k g^n$, and thus lies in $R$, a contradiction.
Therefore $g^n$ is not in the localization $R_{\mathfrak{m}_0}$. 

(ii) We now claim that $R_{\mathfrak{m}_0}$ is nonnoetherian.

By Lemma \ref{cyclelemma2}.4, there is an $m \geq 1$ such that for each $n \geq 1$, 
$$h_n := g^n\sigma^m \in R.$$
Consider the chain of ideals of $R_{\mathfrak{m}_0}$, 
$$0 \subset (h_1) \subseteq (h_1, h_2) \subseteq (h_1, h_2, h_3) \subseteq \ldots.$$
Assume to the contrary that the chain stabilizes.
Then there is an $N \geq 1$ such that
$$h_N = \sum_{n=1}^{N-1} c_nh_n,$$
with $c_n \in R_{\mathfrak{m}_0}$.
Thus, since $R$ is an integral domain, 
$$g^N = \sum_{n =1}^{N-1}c_ng^n.$$
Furthermore, since $R$ is a subalgebra of the polynomial ring $B$ and $g$ is a monomial, there is some $1 \leq n \leq N-1$ such that
$$c_n = g^{N-n} + b,$$
with $b \in R_{\mathfrak{m}_0}$.
But then $g^{N-n} = c_n - b \in R_{\mathfrak{m}_0}$, contrary to Claim (i).

(iii) Similarly, $\hat{Z}_{\mathfrak{z}_0}$ is nonnoetherian.
\end{proof}

Recall that by monomial, we mean a nonconstant monomial.

\begin{Lemma} \label{contains a non-constant monomial}
Suppose that each monomial in $\hat{Z}$ is divisible (in $B$) by $\sigma$.
If $\mathfrak{p} \in \operatorname{Spec}\hat{Z}$ contains a monomial, then $\mathfrak{p} = \mathfrak{z}_0$.
\end{Lemma}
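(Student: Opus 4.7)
The plan is to show $\mathfrak{z}_0 \subseteq \mathfrak{p}$ and then conclude $\mathfrak{p} = \mathfrak{z}_0$ via the maximality of $\mathfrak{z}_0$ in $\hat{Z}$ (Lemma \ref{max ideal}) and the properness of $\mathfrak{p}$. By Lemma \ref{sigma in m}, the hypothesis that $\mathfrak{p}$ contains a monomial immediately yields $\sigma \in \mathfrak{p}$; and via the surjectivity of $\iota_{S/\hat{Z}}$ in Lemma \ref{max ideal}, I would lift $\mathfrak{p}$ to a prime $\mathfrak{q} \in \operatorname{Spec}S$ with $\mathfrak{q}\cap\hat{Z} = \mathfrak{p}$, so that $\sigma \in \mathfrak{q}$ and $\sigma S \subseteq \mathfrak{q}$.

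The pivotal intermediate claim will be that, under the standing hypothesis, every non-constant monomial $g$ of $R$ is divisible by $\sigma$ in $B$. For if instead $\sigma \nmid g$, then Lemma \ref{cyclelemma2}.2 places $g$ in $\hat{Z}$, and the standing hypothesis then forces $\sigma \mid g$, a contradiction. Given $\sigma \mid g$ in $B$, Lemma \ref{cyclelemma2}.5 upgrades this to $g \in \sigma S$.

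To finish, I would take $a \in \mathfrak{z}_0 = \hat{Z} \cap \mathfrak{n}_0$. Since $R$ is the $k$-algebra generated by the monomials $\cap_{i \in Q_0}\bar{\tau}_\psi(e_iAe_i)$, it is a monomial subalgebra of $B$; hence $a \in \hat{Z} \subseteq R$ admits an expansion $a = \sum_j c_j g_j$ in $B$ with each $g_j$ a monomial of $R$. The vanishing of the constant term of $a$ forces every $g_j$ appearing to be non-constant, so by the previous paragraph each $g_j \in \sigma S \subseteq \mathfrak{q}$. Summing, $a \in \mathfrak{q} \cap \hat{Z} = \mathfrak{p}$, which gives $\mathfrak{z}_0 \subseteq \mathfrak{p}$, and maximality of $\mathfrak{z}_0$ then forces equality. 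I expect the only delicate point to be the dichotomy in the second paragraph, where the standing hypothesis is funneled through the contrapositive of Lemma \ref{cyclelemma2}.2 to eliminate all $\sigma$-free monomials of $R$ in one stroke; everything else is a routine combination of the lift-to-$S$ technique already used in Lemma \ref{sigma in m} with the monomial structure of $R$.
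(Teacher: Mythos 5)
Your proof is correct and follows essentially the same route as the paper's: lift $\mathfrak{p}$ to a prime $\mathfrak{q}$ of $S$ via Lemma \ref{max ideal}, obtain $\sigma \in \mathfrak{p} \subseteq \mathfrak{q}$ from Lemma \ref{sigma in m}, and use Lemma \ref{cyclelemma2}.5 to place each relevant monomial in $\sigma S \subseteq \mathfrak{q}$ before intersecting back down to $\hat{Z}$. The only difference is cosmetic: you first extend the $\sigma$-divisibility to all non-constant monomials of $R$ via Lemma \ref{cyclelemma2}.2 so that elements of $\mathfrak{z}_0$ can be expanded in monomials of $R$, whereas the paper argues directly with monomials of $\hat{Z}$ and leaves the final step (that containing all such monomials gives $\mathfrak{z}_0 \subseteq \mathfrak{p}$, hence equality by maximality) implicit.
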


\begin{proof}
Suppose $\mathfrak{p} \in \operatorname{Spec}\hat{Z}$ contains a monomial.
Then $\sigma$ is in $\mathfrak{p}$ by Lemma \ref{sigma in m}.
Furthermore, there is some $\mathfrak{q} \in \operatorname{Spec}S$ such that $\mathfrak{q} \cap \hat{Z} = \mathfrak{p}$, by Lemma \ref{max ideal}.

Suppose $g$ is a monomial in $\hat{Z}$.
By assumption, there is a monomial $h$ in $B$ such that $g = \sigma h$.
By Lemma \ref{cyclelemma2}.5, $h$ is also in $S$.
Whence $g = \sigma h \in \mathfrak{q}$ since $\sigma \in \mathfrak{p} \subseteq \mathfrak{q}$.
But $g \in \hat{Z}$.
Therefore $g \in \mathfrak{q} \cap \hat{Z} = \mathfrak{p}$.
Since $g$ was arbitrary, $\mathfrak{p}$ contains all monomials in $\hat{Z}$.
\end{proof}

\begin{Remark} \rm{
In Lemma \ref{contains a non-constant monomial}, we assumed that $\sigma$ divides all monomials in $\hat{Z}$.
An example of a dimer algebra with this property is given in Figure \ref{sigma divides all}, where the center is the nonnoetherian ring $Z \cong \hat{Z} = R = k + \sigma S$, and the cycle algebra is the quadric cone, $S = k[xz, xw, yz, yw]$.  
The contraction $\psi: A \to A'$ is cyclic. 
}\end{Remark}

\begin{figure}
$$\begin{array}{ccc}
\xy  0;/r.4pc/:
(-12,6)*+{\text{\scriptsize{$2$}}}="1";(0,6)*+{\text{\scriptsize{$1$}}}="2";(12,6)*+{\text{\scriptsize{$2$}}}="3";
(-12,-6)*+{\text{\scriptsize{$1$}}}="4";(0,-6)*+{\text{\scriptsize{$2$}}}="5";(12,-6)*+{\text{\scriptsize{$1$}}}="6";
(-12,0)*{\cdot}="7";(0,0)*{\cdot}="8";(12,0)*{\cdot}="9";
(-6,6)*{\cdot}="10";(6,6)*{\cdot}="11";(-6,-6)*{\cdot}="12";(6,-6)*{\cdot}="13";
{\ar@[green]_1"2";"10"};{\ar@{..>}"2";"10"};
{\ar_y"10";"1"};{\ar^{}_w"7";"4"};
{\ar@[green]_1"4";"12"};{\ar@{..>}"4";"12"};
{\ar_x"12";"5"};
{\ar@[green]^1"5";"8"};{\ar@{..>}"5";"8"};
{\ar@[green]^1"2";"11"};{\ar@{..>}"2";"11"};
{\ar^x"11";"3"};{\ar^w"9";"6"};
{\ar@[green]^1"6";"13"};{\ar@{..>}"6";"13"};
{\ar^y"13";"5"};
{\ar@[green]^1"3";"9"};{\ar@{..>}"3";"9"};
{\ar@[green]_1"1";"7"};{\ar@{..>}"1";"7"};
{\ar^z"8";"2"};
\endxy
& \ \ \ \stackrel{\psi}{\longrightarrow} \ \ \ &
\xy 0;/r.4pc/:
(-12,6)*+{\text{\scriptsize{$2$}}}="1";(0,6)*+{\text{\scriptsize{$1$}}}="2";(12,6)*+{\text{\scriptsize{$2$}}}="3";
(-12,-6)*+{\text{\scriptsize{$1$}}}="4";(0,-6)*+{\text{\scriptsize{$2$}}}="5";(12,-6)*+{\text{\scriptsize{$1$}}}="6";
{\ar_{y}"2";"1"};{\ar_{w}"1";"4"};{\ar_{x}"4";"5"};{\ar^{z}"5";"2"};{\ar^{x}"2";"3"};{\ar^{w}"3";"6"};{\ar^{y}"6";"5"};
\endxy\\
Q & & Q'
\end{array}$$
\caption{An example where $\sigma$ divides all monomials in $R$. The quivers are drawn on a torus, the contracted arrows are drawn in dotted green, and the arrows are labeled by their $\bar{\tau}_{\psi}$ and $\bar{\tau}$-images.}
\label{sigma divides all}
\end{figure}

\begin{Lemma} \label{a cycle q}
Suppose that there is a monomial in $\hat{Z}$ which is not divisible (in $B$) by $\sigma$.
Let $\mathfrak{m} \in \operatorname{Max}\hat{Z} \setminus \left\{ \mathfrak{z}_0 \right\}$.
Then there is a monomial $g \in \hat{Z} \setminus \mathfrak{m}$ such that $\sigma \nmid g$.
\end{Lemma}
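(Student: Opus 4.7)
The plan is to proceed by contradiction: assume every monomial $g \in \hat{Z}$ with $\sigma \nmid g$ lies in $\mathfrak{m}$, and derive $\mathfrak{m} = \mathfrak{z}_0$, contradicting $\mathfrak{m} \in \operatorname{Max}\hat{Z} \setminus \{\mathfrak{z}_0\}$. The strategy parallels the complementary Lemma \ref{contains a non-constant monomial}: pull $\mathfrak{m}$ back to a maximal ideal of $S$, show that the $\sigma$-divisible monomials of $\hat{Z}$ also land in $\mathfrak{m}$, and use that $\hat{Z}$ is spanned by its monomials to identify the resulting ideal with $\mathfrak{z}_0$.

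The lemma's hypothesis furnishes a monomial $g_0 \in \hat{Z}$ with $\sigma \nmid g_0$, and the contradictory assumption forces $g_0 \in \mathfrak{m}$. Hence $\mathfrak{m}$ contains a nontrivial monomial, so $\sigma \in \mathfrak{m}$ by Lemma \ref{sigma in m}. By Lemma \ref{max ideal} I lift to $\mathfrak{q} \in \operatorname{Max}S$ with $\mathfrak{q} \cap \hat{Z} = \mathfrak{m}$, whence $\sigma \in \mathfrak{q}$. For any nontrivial monomial $g \in \hat{Z}$: if $\sigma \nmid g$, then $g \in \mathfrak{m}$ by the standing assumption; if $\sigma \mid g$ in $B$, write $g = \sigma h$ for a monomial $h \in B$, use Lemma \ref{cyclelemma2}.5 to promote $h$ to $S$, and conclude $g = \sigma h \in \mathfrak{q}$, whence $g \in \mathfrak{q} \cap \hat{Z} = \mathfrak{m}$. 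So every nontrivial monomial of $\hat{Z}$ lies in $\mathfrak{m}$.

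To close, I would observe that $\hat{Z}$ is spanned as a $k$-vector space by its monomials, since it is generated as a $k$-algebra by monomial images under $\bar{\tau}_\psi$ and sits inside the polynomial ring $B$. Consequently $\mathfrak{z}_0 = \mathfrak{n}_0 \cap \hat{Z}$ coincides with the ideal of $\hat{Z}$ generated by its nontrivial monomials, so the previous step yields $\mathfrak{m} \supseteq \mathfrak{z}_0$; maximality then forces $\mathfrak{m} = \mathfrak{z}_0$, the required contradiction. The one routine wrinkle is justifying the monomial-spanning property of $\hat{Z}$ used at this last step, but this is immediate from the construction of $\hat{Z}$ via the map $\bar{\tau}_\psi$; the genuinely substantive work — the pull-back to $S$ and the divisibility-to-membership device — is already packaged in Lemmas \ref{max ideal}, \ref{sigma in m}, and \ref{cyclelemma2}.5.
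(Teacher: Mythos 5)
Your proposal is correct and follows essentially the same route as the paper: assume all $\sigma$-indivisible monomials of $\hat{Z}$ lie in $\mathfrak{m}$, deduce $\sigma \in \mathfrak{m}$ via Lemma \ref{sigma in m}, lift to $\operatorname{Max}S$ via Lemma \ref{max ideal}, use Lemma \ref{cyclelemma2}.5 to sweep in the $\sigma$-divisible monomials, and conclude $\mathfrak{m} = \mathfrak{z}_0$ from the fact that $\hat{Z}$ is spanned by monomials. The only cosmetic difference is that the paper isolates the final step (all monomials in $\mathfrak{m}$ forces $\mathfrak{m} = \mathfrak{z}_0$) as a preliminary Claim (i), whereas you invoke it at the end.
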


\begin{proof}
Let $\mathfrak{m} \in \operatorname{Max}\hat{Z} \setminus \left\{ \mathfrak{z}_0 \right\}$.

(i) We first claim that there is a monomial in $\hat{Z} \setminus \mathfrak{m}$.
Assume otherwise.
Then
$$\mathfrak{n}_0 \cap \hat{Z} \subseteq \mathfrak{m}.$$
But $\mathfrak{z}_0 := \mathfrak{n}_0 \cap \hat{Z}$ is a maximal ideal by Lemma \ref{max ideal}.
Thus $\mathfrak{z}_0 = \mathfrak{m}$, contrary to assumption.

(ii) We now claim that there is a monomial in $\hat{Z} \setminus \mathfrak{m}$ which is not divisible by $\sigma$.
Indeed, assume to the contrary that every monomial in $\hat{Z}$, which is not divisible by $\sigma$, is in $\mathfrak{m}$.
By assumption, there is a monomial in $\hat{Z}$ that is not divisible by $\sigma$. 
Thus there is at least one monomial in $\mathfrak{m}$.
Therefore $\sigma$ is in $\mathfrak{m}$, by Lemma \ref{sigma in m}.

There is an $\mathfrak{n} \in \operatorname{Max}S$ such that $\mathfrak{n} \cap \hat{Z} = \mathfrak{m}$, by Lemma \ref{max ideal}.
Furthermore, $\sigma \in \mathfrak{n}$ since $\sigma \in \mathfrak{m}$.
Suppose $g \in \hat{Z}$ is a monomial for which $\sigma \mid g$; say $g = \sigma h$ for some monomial $h \in B$.
Then $h \in S$ by Lemma \ref{cyclelemma2}.5.
Whence, $g = \sigma h \in \mathfrak{n}$.
Thus
$$g \in \mathfrak{n} \cap \hat{Z} = \mathfrak{m}.$$
It follows that every monomial in $\hat{Z}$, which \textit{is} divisible by $\sigma$, is also in $\mathfrak{m}$.
Therefore every monomial in $\hat{Z}$ is in $\mathfrak{m}$.
But this contradicts our choice of $\mathfrak{m}$ by Claim (i).
\end{proof}

\begin{figure}
$$\begin{array}{c}
\xy  0;/r.4pc/:
(-30,2.6)*{\cdot}="1";(-24,2.6)*{\cdot}="2";(-18,2.6)*{\cdot}="3";(-12,2.6)*{\cdot}="4";
(-6,2.6)*{\cdot}="5";
(3,2.6)*{\cdot}="6";(9,2.6)*{\cdot}="7";(15,2.6)*{\cdot}="8";(21,2.6)*{\cdot}="9";
(27,2.6)*{\cdot}="10";
(-27,-2.6)*{\cdot}="11";(-21,-2.6)*{\cdot}="12";(-15,-2.6)*{\cdot}="13";
(-9,-2.6)*{\cdot}="14";(-3,-2.6)*{\cdot}="15";
(6,-2.6)*{\cdot}="16";(12,-2.6)*{\cdot}="17";(18,-2.6)*{\cdot}="18";
(24,-2.6)*{\cdot}="19";(30,-2.6)*{\cdot}="20";
{\ar@[blue]"1";"2"};{\ar@[blue]"2";"3"};{\ar@[blue]"3";"4"};{\ar@[blue]"4";"5"};
{\ar@{.}@[blue]"5";"6"};{\ar@[blue]"6";"7"};{\ar@[blue]"7";"8"};{\ar@[blue]"8";"9"};
{\ar@[blue]"9";"10"};
{\ar@[red]"11";"12"};{\ar@[red]"12";"13"};{\ar@[red]"13";"14"};{\ar@[red]"14";"15"};
{\ar@[red]@{.}"15";"16"};{\ar@[red]"16";"17"};{\ar@[red]"17";"18"};{\ar@[red]"18";"19"};
{\ar@[red]"19";"20"};
{\ar@[brown]"20";"10"};{\ar"10";"19"};{\ar"19";"9"};{\ar"9";"18"};{\ar"18";"8"};{\ar"8";"17"};{\ar"17";"7"};{\ar"7";"16"};{\ar"16";"6"};
{\ar"15";"5"};{\ar"5";"14"};{\ar"14";"4"};{\ar"4";"13"};{\ar"13";"3"};{\ar"3";"12"};
{\ar"12";"2"};{\ar"2";"11"};{\ar@[brown]"11";"1"};
\endxy 
\\ 
\text{a column subquiver}\\
\\
\xy  0;/r.4pc/:
(-15,2.6)*{}="0";(-12,2.6)*{\cdot}="1";(-6,2.6)*{\cdot}="2";
(3,2.6)*{\cdot}="3";(9,2.6)*{\cdot}="4";(15,2.6)*{\cdot}="5";
(-18,0)*{\cdot}="6";
(-15,-2.6)*{\cdot}="7";(-9,-2.6)*{\cdot}="8";(-3,-2.6)*{\cdot}="9";
(6,-2.6)*{\cdot}="10";(12,-2.6)*{\cdot}="11";
(15,-2.6)*{}="12";
(18,0)*{\cdot}="13";
{\ar@[blue]@/^.3pc/@{-}"6";"0"};{\ar@[blue]"0";"1"};{\ar@[blue]"1";"2"};
{\ar@{.}"2";"3"};{\ar@[blue]"3";"4"};{\ar@[blue]"4";"5"};
{\ar@[blue]@/^.3pc/"5";"13"};
{\ar@[red]@/_.3pc/"6";"7"};{\ar@[red]"7";"8"};{\ar@[red]"8";"9"};
{\ar@{.}"9";"10"};{\ar@[red]"10";"11"};{\ar@[red]@{-}"11";"12"};
{\ar@[red]@/_.3pc/"12";"13"};
{\ar@/^.3pc/"13";"5"};{\ar"5";"11"};{\ar"11";"4"};{\ar"4";"10"};{\ar"10";"3"};
{\ar"9";"2"};{\ar"2";"8"};{\ar"8";"1"};{\ar"1";"7"};{\ar@/_.3pc/"7";"6"};
\endxy \\
\text{a pillar subquiver}
\end{array}$$
\caption{A column and pillar of a dimer quiver $Q$.
The black interior arrows are arrows of $Q$; the blue and red boundary arrows are paths of length at least one in $Q$; and each interior cycle is a unit cycle of $Q$.
The leftmost and rightmost brown arrows of the column are identified. 
Note that the blue and red bounding paths of the pillar are equal modulo $I$.}
\label{columnsandpillarsfigure}
\end{figure}

\begin{Definition} \label{columnpillardefinition} \rm{
A \textit{column} and \textit{pillar} of a dimer quiver $Q$ are subquivers as shown in Figure \ref{columnsandpillarsfigure}.
}\end{Definition}

\begin{Lemma} \label{columns and pillars}
Let $z = \sum_{i \in Q_0} q_i$ be a central element of $A$ such that for each $i \in Q_0$, $q_i$ is a cycle in $e_iAe_i$.
(In particular, $\overbar{q}_i = \overbar{q}_j$ for each $i,j \in Q_0$.)
If $\sigma \nmid \overbar{q}_i$, then the set of representatives $\tilde{q}_i \in e_ikQe_i$ of the $q_i$ partition $Q$ into columns and pillars.
\end{Lemma}

\begin{proof}
See \cite[Lemmas 4.8.3 and 4.12]{B2}.
\end{proof}

\begin{Lemma} \label{notg}
Let $p \in \mathcal{C}^u$, $q \in \mathcal{C}^v$ be cycles in $Q$ such that $u,v \in \mathbb{Z}^2$ are linearly independent over $\mathbb{R}$. 
If 
$$\overbar{p} \in S \setminus \hat{Z}, \ \ \ \ \overbar{q} \in \hat{Z},$$ 
and $\sigma \nmid \overbar{q}$, then 
$$\overbar{p} \overbar{q} \in \hat{Z}.$$
\end{Lemma}

\begin{proof} 
Since $\overbar{q}$ is a monomial in $\hat{Z}$, for each $j \in Q_0$ there is a cycle $q_j \in e_jAe_j$ such that $\overbar{q}_j = \overbar{q}$, $q_{\operatorname{t}(q)} = q$, and the sum
$$z := \sum_{j \in Q_0}q_j$$
is in the center $Z$ of $A$.
Furthermore, since $\sigma \nmid \overbar{q}$, the set of representatives of the $q_j$ partition $Q$ into columns and pillars, by Lemma \ref{columns and pillars}.

Fix a representative $\tilde{p}$ of $p$ (that is, $\tilde{p} + I = p$), and for each $j \in Q_0$, choose a representative $\tilde{q}_j$ of $q_j$.
By assumption, $u$ and $v$ are linearly independent over $\mathbb{R}$.
Thus, for each $j \in Q_0$, the cycles $\tilde{q}_j$ and $\tilde{p}$ intersect at some vertex $i(j) \in Q_0$.
Factor $\tilde{q}_j$ and $\tilde{p}$ into paths
\begin{align*}
\tilde{q}_j & = q_{j2}e_{i(j)}q_{j1}\\
\tilde{p} & = p_{j2}e_{i(j)}p_{j1},
\end{align*}
and consider the cycles
\begin{equation} \label{cycler}
r_j := q_{j2}p_{j1}p_{j2}q_{j1} + I \in e_jAe_j.
\end{equation}
Note that 
$$\overbar{r}_j = \overbar{p}\overbar{q}_j = \overbar{p} \overbar{q}.$$
Thus, to prove the lemma, it suffices to show that the element 
$$r := \sum_{j \in Q_0}r_j$$ 
is in the center $Z$ of $A$.

(a) We first claim that $r$ is independent of the choice of representatives $\tilde{q}_j$ of $q_j$.
Fix $j \in Q_0$, and suppose $\tilde{q}_j$ and $\tilde{q}'_j$ are two representatives of $q_j$.
By Lemma \ref{columns and pillars}, it suffices to suppose that $\tilde{q}_j$ and $\tilde{q}_j'$ bound a pillar, and $\tilde{p}$ intersects this pillar as shown in Figure \ref{claima}.
Factor $\tilde{p}$, $\tilde{q}_j$, $\tilde{q}'_j$ into paths
$$\tilde{p} = p_2bp_1, \ \ \ \ \tilde{q}_j = q_5q_3q_2q_1, \ \ \ \ \tilde{q}'_j = q_5q'_4q'_3q'_2q_1.$$
Then
\begin{align*}
q_5 q_3bp_1p_2q_2q_1 & \stackrel{\textsc{(i)}}{\equiv} q_5q'_4q'_3cbp_1p_2q_2q_1\\
& \equiv q_5q'_4\sigma_{\operatorname{h}(p_1)}p_1p_2q_2q_1\\
& \stackrel{\textsc{(ii)}}{\equiv} q_5q'_4p_1p_2\sigma_{\operatorname{t}(p_2)}q_2q_1\\
& \equiv q_5q'_4p_1p_2 bq'_3cq_2q_1\\
& \stackrel{\textsc{(iii)}}{\equiv} q_5q'_4p_1p_2bq'_2q_1,
\end{align*}
where (\textsc{i}) and (\textsc{iii}) hold by the dimer relations since each arrow in the interior of the column is an arrow in the quiver, and (\textsc{ii}) holds by Lemma \ref{sigmalemma}.
Therefore the two representatives $\tilde{q}_j$, $\tilde{q}'_j$ of $q_j$ define the same cycle $r_j \in e_jAe_j$ in (\ref{cycler}).

(b) We now claim that $r$ is in $Z$.
Since $r$ is a sum of cycles, $r$ trivially commutes with the vertex idempotents.
Thus, to show that $r$ is in $Z$, it suffices to show that $ra = ar$ for each arrow $a \in Q_1$.

Fix an arrow $a$.
Set $j := \operatorname{t}(a)$ and $k := \operatorname{h}(a)$.

If $a$ is a leftmost arrow subpath of a representative $\tilde{r}_k$ of $r_k$, then there is a representative $\tilde{r}_j$ of $r_j$ that is a cyclic permutation of $\tilde{r}_k$, by Claim (a).
Whence
$$ra = r_k a = a r_j = ar.$$

So suppose $a$ is not a leftmost arrow subpath of any representative of $r_k$.
There are three cases to consider, shown in Figure \ref{columnscase}.

Case (i): Suppose $\tilde{q}_j$ and $\tilde{q}_k$ bound a column, and $\tilde{p}$ intersects this column as shown in Figure \ref{columnscase}.i.
Factor $\tilde{p}$, $\tilde{q}_j$, $\tilde{q}_k$ into paths
$$\tilde{p} = p_2bp_1, \ \ \ \ \tilde{q}_j = q_3q_2q_1, \ \ \ \ \tilde{q}_k = q'_2q'_1.$$
Then
\begin{align*}
(q'_2(bp_1p_2)q'_1)a & \stackrel{\textsc{(i)}}{\equiv} q'_2bp_1p_2bq_2q_1\\
& \stackrel{\textsc{(i)}}{\equiv} a(q_3(p_1p_2b)q_2q_1),
\end{align*}
where (\textsc{i}) and (\textsc{ii}) hold by the dimer relations since each arrow in the interior of the column is an arrow in the quiver.

Case (ii): Suppose $\tilde{q}_j$ and $\tilde{q}_k$ bound a column, and $\tilde{p}$ intersects this column as shown in Figure \ref{columnscase}.ii.
Factor $\tilde{p}$, $\tilde{q}_j$, $\tilde{q}_k$ into paths
$$\tilde{p} = p_2cp_1, \ \ \ \ \tilde{q}_j = q_3q_2q_1, \ \ \ \ \tilde{q}_k = q'_2q'_1.$$
Thus,
\begin{align*}
(q'_2(p_1p_2c)q'_1)a & \stackrel{\textsc{(i)}}{\equiv} q'_2p_1p_2cbq_2q_1\\
& \equiv q'_2p_1p_2 \sigma_{\operatorname{t}(q_2)} q_1\\
& \stackrel{\textsc{(ii)}}{\equiv} q'_2\sigma_{\operatorname{h}(p_1)}p_1p_2q_1\\
& \equiv q'_2 bq_2cp_1p_2q_1\\
& \stackrel{\textsc{(iii)}}{\equiv} a(q_3q_2(cp_1p_2)q_1),
\end{align*}
where (\textsc{i}) and (\textsc{iii}) hold by the dimer relations since each arrow in the interior of the column is an arrow in the quiver, and (\textsc{ii}) holds by Lemma \ref{sigmalemma}.

Case (iii): Suppose $\tilde{q}_j$ and $\tilde{q}_k$ bound a pillar, and $\tilde{p}$ intersects $\tilde{q}_j$ as shown in Figure \ref{columnscase}.iii.
Factor $\tilde{p}$, $\tilde{q}_j$, $\tilde{q}_k$ into paths
$$\tilde{p} = p_2p_1, \ \ \ \ \tilde{q}_j = q_4q_3q_2q_1, \ \ \ \ \tilde{q}_k = q'_4q_3q_2q'_1.$$ 
Whence,
\begin{align*}
(q'_4q_3(p_1p_2)q_2q'_1)a & \stackrel{\textsc{(i)}}{\equiv} q'_4q_3p_1p_2q_2\sigma_{\operatorname{t}(q_2)} q_1\\
& \stackrel{\textsc{(ii)}}{\equiv} q'_4\sigma_{\operatorname{h}(q_3)}q_3p_1p_2q_2q_1\\
& \stackrel{\textsc{(iii)}}{\equiv} a(q_4q_3(p_1p_2)q_2q_1)
\end{align*}
where (\textsc{i}) and (\textsc{iii}) hold by the dimer relations since each arrow in the interior of the pillar is an arrow in the quiver, and (\textsc{ii}) holds by Lemma \ref{sigmalemma}.

Therefore $ra = ar$ holds in each case.
\end{proof}

\begin{figure}
$$\xy  0;/r.4pc/:
(-33,0)*+{\text{\scriptsize{$j$}}}="1";(-21,0)*{\cdot}="2";(-15,2.6)*{\cdot}="3";
(-6,2.6)*{\cdot}="4";(0,2.6)*{\cdot}="5";(6,2.6)*{\cdot}="6";
(15,2.6)*{\cdot}="7";(21,2.6)*{\cdot}="8";
(-18,-2.6)*{\cdot}="10";(-12,-2.6)*{\cdot}="11";
(-3,-2.6)*{\cdot}="12";(3,-2.6)*{\cdot}="13";
(12,-2.6)*{\cdot}="14";(18,-2.6)*{\cdot}="15";(24,0)*{\cdot}="16";
(0,14)*{\cdot}="17";
(0,-14)*{\cdot}="19";
(36,0)*+{\text{\scriptsize{$j$}}}="20";
(-18,2.6)*{}="2.5";(21,-2.6)*{}="15.5";
{\ar@[blue]^{q_1}"1";"2"};{\ar@{-}@[blue]@/^.3pc/"2";"2.5"};{\ar@[blue]"2.5";"3"};
{\ar@[blue]"4";"5"};{\ar@[blue]"5";"6"};{\ar@[blue]"7";"8"};
{\ar@[red]"10";"11"};{\ar@[red]_{q'_3}"12";"13"};{\ar@[red]"14";"15"};
{\ar@{-}@[red]"15";"15.5"};{\ar@[red]@/_.3pc/"15.5";"16"};
{\ar@/^.3pc/"16";"8"};{\ar@[blue]@/^.3pc/"8";"16"};
{\ar"8";"15"};{\ar"15";"7"};{\ar"7";"14"};
{\ar"6";"13"};{\ar@[orange]|-b"13";"5"};{\ar|-c"5";"12"};{\ar"12";"4"};
{\ar"11";"3"};{\ar"3";"10"};
{\ar@[red]@/_.3pc/"2";"10"};{\ar@/_.3pc/"10";"2"};
{\ar@[orange]_{p_1}"19";"13"};{\ar@[orange]_{p_2}"5";"17"};
{\ar@{.}@[blue]^{q_2}"3";"4"};{\ar@{.}@[blue]^{q_3}"6";"7"};
{\ar@{.}@[red]_{q'_2}"11";"12"};{\ar@{.}@[red]_{q'_4}"13";"14"};
{\ar@[blue]_{q_5}"16";"20"};
\endxy$$
\caption{The paths $\tilde{p} = p_2bp_1$, $\tilde{q}_j = q_5q_3q_2q_1$, and $\tilde{q}'_j = q_5q'_4q'_3q'_2q_1$ for Claim (a) in the proof of Lemma \ref{notg}.}
\label{claima}
\end{figure}

\begin{figure}
$$\begin{array}{ccc}
(i): & & \xy  0;/r.4pc/:
(-27,2.6)*{\cdot}="1";(-21,2.6)*{\cdot}="2";(-15,2.6)*{\cdot}="3";
(-6,2.6)*{\cdot}="4";(0,2.6)*{\cdot}="5";(6,2.6)*{\cdot}="6";
(15,2.6)*{\cdot}="7";(21,2.6)*{\cdot}="8";
(-24,-2.6)*{\cdot}="9";(-18,-2.6)*{\cdot}="10";(-12,-2.6)*{\cdot}="11";
(-3,-2.6)*{\cdot}="12";(3,-2.6)*{\cdot}="13";
(12,-2.6)*{\cdot}="14";(18,-2.6)*{\cdot}="15";(24,-2.6)*{\cdot}="16";
(0,14)*{\cdot}="17";
(0,-14)*{\cdot}="19";
{\ar@[blue]"1";"2"};{\ar@[blue]"2";"3"};{\ar@[blue]"4";"5"};{\ar@[blue]"5";"6"};{\ar@[blue]"7";"8"};
{\ar@[red]"9";"10"};{\ar@[red]"10";"11"};{\ar@[red]_{q_2}"12";"13"};{\ar@[red]"14";"15"};{\ar@[red]"15";"16"};
{\ar@[brown]_a"16";"8"};{\ar"8";"15"};{\ar"15";"7"};{\ar"7";"14"};
{\ar"6";"13"};{\ar@[orange]|-b"13";"5"};{\ar|-c"5";"12"};{\ar"12";"4"};
{\ar"11";"3"};{\ar"3";"10"};{\ar"10";"2"};{\ar"2";"9"};{\ar@[brown]^a"9";"1"};
{\ar@[orange]_{p_1}"19";"13"};{\ar@[orange]_{p_2}"5";"17"};
{\ar@{.}@[blue]^{q'_1}"3";"4"};{\ar@{.}@[blue]^{q'_2}"6";"7"};
{\ar@{.}@[red]_{q_1}"11";"12"};{\ar@{.}@[red]_{q_3}"13";"14"};
\endxy
\\ \\
(ii): & & \xy  0;/r.4pc/:
(-27,2.6)*{\cdot}="1";(-21,2.6)*{\cdot}="2";(-15,2.6)*{\cdot}="3";
(-6,2.6)*{\cdot}="4";(0,2.6)*{\cdot}="5";(6,2.6)*{\cdot}="6";
(15,2.6)*{\cdot}="7";(21,2.6)*{\cdot}="8";
(-24,-2.6)*{\cdot}="9";(-18,-2.6)*{\cdot}="10";(-12,-2.6)*{\cdot}="11";
(-3,-2.6)*{\cdot}="12";(3,-2.6)*{\cdot}="13";
(12,-2.6)*{\cdot}="14";(18,-2.6)*{\cdot}="15";(24,-2.6)*{\cdot}="16";
(0,14)*{\cdot}="17";
(0,-14)*{\cdot}="19";
{\ar@[blue]"1";"2"};{\ar@[blue]"2";"3"};{\ar@[blue]"4";"5"};{\ar@[blue]"5";"6"};{\ar@[blue]"7";"8"};
{\ar@[red]"9";"10"};{\ar@[red]"10";"11"};{\ar@[red]_{q_2}"12";"13"};{\ar@[red]"14";"15"};{\ar@[red]"15";"16"};
{\ar@[brown]_a"16";"8"};{\ar"8";"15"};{\ar"15";"7"};{\ar"7";"14"};
{\ar"6";"13"};{\ar|-b"13";"5"};{\ar@[orange]|-c"5";"12"};{\ar"12";"4"};
{\ar"11";"3"};{\ar"3";"10"};{\ar"10";"2"};{\ar"2";"9"};{\ar@[brown]^a"9";"1"};
{\ar@[orange]_{p_1}"17";"5"};{\ar@[orange]_{p_2}"12";"19"};
{\ar@{.}@[blue]^{q'_1}"3";"4"};{\ar@{.}@[blue]^{q'_2}"6";"7"};
{\ar@{.}@[red]_{q_1}"11";"12"};{\ar@{.}@[red]_{q_3}"13";"14"};
\endxy
\\ \\
(iii): & &
\xy  0;/r.4pc/:
(-27,2.6)*{\cdot}="1";(-21,2.6)*{\cdot}="2";(-15,2.6)*{\cdot}="3";
(-12,0)*{\cdot}="4";(0,0)*{\cdot}="5";(12,0)*{\cdot}="6";
(18,2.6)*{\cdot}="8";
(-24,-2.6)*{\cdot}="9";(-18,-2.6)*{\cdot}="10";
(15,-2.6)*{\cdot}="15";(21,-2.6)*{\cdot}="16";
(0,14)*{\cdot}="17";
(0,-14)*{\cdot}="19";
(24,2.6)*{\cdot}="12";(27,-2.6)*{\cdot}="13";
(15,2.6)*{}="7";(-15,-2.6)*{}="11";
{\ar@[blue]"1";"2"};{\ar@[blue]^{q'_{1}}"2";"3"};{\ar@[blue]@/^.3pc/"3";"4"};
{\ar@[blue]^{q_{2}}"4";"5"};{\ar@[blue]^{q_{3}}"5";"6"};
{\ar@[red]"9";"10"};{\ar@{-}@[red]_{q_1}"10";"11"};{\ar@[red]@/_.3pc/"11";"4"};
{\ar@/_.3pc/"15";"6"};{\ar_{q_4}@[red]"15";"16"};
{\ar"16";"8"};{\ar"8";"15"};{\ar@/_.3pc/@[red]"6";"15"};
{\ar@[blue]@{-}@/^.3pc/"6";"7"};{\ar@[blue]^{q'_4}"7";"8"};
{\ar@/^.3pc/"4";"3"};
{\ar"3";"10"};{\ar"10";"2"};{\ar"2";"9"};{\ar@[brown]^a"9";"1"};
{\ar@[orange]_{p_2}"5";"17"};{\ar@[orange]_{p_1}"19";"5"};
{\ar@[blue]"8";"12"};{\ar"12";"16"};{\ar@[red]"16";"13"};{\ar@[brown]_a"13";"12"};
\endxy
\end{array}$$
\caption{The three cases for Claim (b) in the proof of Lemma \ref{notg}.}
\label{columnscase}
\end{figure}

\begin{Lemma} \label{iveupyet}
Let $p \in \mathcal{C}^u$, $q \in \mathcal{C}^v$ be cycles in $Q$ such that $u,v \in \mathbb{Z}^2$ are linearly dependent over $\mathbb{R}$.
If 
$$\overbar{p} \in S \setminus \hat{Z}, \ \ \ \ \overbar{q} \in \hat{Z},$$ 
and $\sigma$ does not divide $\overbar{p}$ or $\overbar{q}$, then there is a cycle $r$ and integers $m,n \geq 1$ such that
$$r^n = q, \ \ \ \ \overbar{r} \in \hat{Z}, \ \ \ \text{ and } \ \ \ \overbar{p} \overbar{r}^m \in \hat{Z}.$$
\end{Lemma}

\begin{proof}
(i) First suppose $u = (0,0)$.
Since $\overbar{p} \not \in \hat{Z}$, $p$ is not a vertex.
Thus, there is some $\ell \geq 1$ such that $\overbar{p} = \sigma^{\ell}$, by Lemma \ref{cyclelemma}.1.
Whence $\sigma \mid \overbar{p}$, contrary to assumption. 

(ii) Assume to the contrary that there are positive integers $n_1,n_2 \geq 1$ such that $n_1u = n_2v$.
By assumption, $\sigma \nmid \overbar{p}$.
Thus there is a simple matching $D$ such that $x_D \nmid \overbar{p}$.
Whence $\sigma \nmid \overbar{p}^{n_1}$.
Similarly, $\sigma \nmid \overbar{q}^{n_2}$.
Consequently,
\begin{equation} \label{cold}
\overbar{p}^{n_1} \stackrel{\textsc{(i)}}{=} \overbar{q}^{n_2} \in \hat{Z} \stackrel{\textsc{(ii)}}{\subseteq} R,
\end{equation}
where (\textsc{i}) holds by Lemma \ref{cyclelemma}.2, and (\textsc{ii}) holds by \cite[Theorem 1.1.3]{B6}.

Now if a monomial $g \in S$ is not in $R$ and $\sigma \nmid g$, then $g^{\ell}$ is also not in $R$ for each $\ell \geq 1$, by Lemma \ref{cyclelemma2}.6.
Thus (\ref{cold}) implies that $\overbar{p}$ is in $R$.
Therefore $\overbar{p}$ is in $\hat{Z}$ since $\sigma \nmid \overbar{p}$, by Lemma \ref{cyclelemma2}.2.
But this contradicts our choice of $\overbar{p}$.

(iii) Finally, suppose there are positive integers $n_1,n_2 \geq 1$ such that $n_1u = -n_2v$.
Let $\hat{v} \in \mathbb{Z}^2$ be the vector of minimal length (with respect to the standard $\mathbb{R}^2$ metric) such that $v = n_3\hat{v}$ for some $n_3 \geq 1$.
Then there is a cycle $r \in \mathcal{C}^{\hat{v}}$ such that $\sigma \nmid \overbar{r}$, by \cite[Corollary 3.9]{B1}.
Thus $\overbar{r} \in \hat{Z}$ since $\overbar{q} \in \hat{Z}$, by Lemma \ref{cyclelemma2}.6.
Furthermore, setting $m := \frac{n_2n_3}{n_1}$ we have
$$u = - m \hat{v}.$$
Therefore there is some $\ell \geq 1$ such that
$$\overbar{p} \overbar{r}^m \stackrel{\textsc{(i)}}{=} \sigma^{\ell} \in \hat{Z},$$
where (\textsc{i}) holds by Lemma \ref{cyclelemma}.1.
\end{proof}

Recall the subsets (\ref{U U*}) of $\operatorname{Max}S$ and the morphisms (\ref{max surjective}). 

\begin{Proposition} \label{n in maxs}
Let $\mathfrak{n} \in \operatorname{Max}S$.
Then
\begin{equation*} \label{n in maxs1}
\mathfrak{n} \cap \hat{Z} \not = \mathfrak{z}_0 \ \ \text{ if and only if } \ \ \hat{Z}_{\mathfrak{n} \cap \hat{Z}} = S_{\mathfrak{n}},
\end{equation*}
and
\begin{equation*} \label{n in maxs2}
\mathfrak{n} \cap R \not = \mathfrak{m}_0 \ \ \text{ if and only if } \ \ R_{\mathfrak{n} \cap R} = S_{\mathfrak{n}}.
\end{equation*}
Consequently,
$$\kappa_{S/\hat{Z}}(U_{S/\hat{Z}}) = \operatorname{Max}\hat{Z} \setminus \{ \mathfrak{z}_0 \} \ \ \ \text{ and } \ \ \ \kappa_{S/R}(U_{S/R}) = \operatorname{Max}R \setminus \{ \mathfrak{m}_0 \}.$$
\end{Proposition}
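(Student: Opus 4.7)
The plan is to prove the two biconditionals and then obtain the image statement from the surjectivity of $\kappa_{S/\hat{Z}}$ and $\kappa_{S/R}$ established in Lemma \ref{max ideal}. I focus on the $\hat{Z}$-version; the argument for $R$ is parallel. The inclusion $\hat{Z}_{\mathfrak{n}\cap\hat{Z}} \subseteq S_{\mathfrak{n}}$ is automatic since $\hat{Z}\setminus(\mathfrak{n}\cap\hat{Z}) \subseteq S\setminus\mathfrak{n}$, so the biconditional amounts to deciding when the reverse inclusion holds. The reverse implication ($\Leftarrow$) is immediate: if $\mathfrak{n}\cap\hat{Z}=\mathfrak{z}_0$, then $\hat{Z}_{\mathfrak{z}_0}$ is nonnoetherian (Proposition \ref{local nonnoetherian}) while $S_{\mathfrak{n}}$ is noetherian (Lemma \ref{S noetherian}), so the two local rings cannot coincide.

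For the forward implication, set $\mathfrak{m} := \mathfrak{n}\cap\hat{Z}\neq\mathfrak{z}_0$. A short argument using primality of $\mathfrak{n}$ reduces $S_\mathfrak{n}\subseteq \hat{Z}_\mathfrak{m}$ to the containment $S\subseteq\hat{Z}_\mathfrak{m}$: given $S\subseteq\hat{Z}_\mathfrak{m}$, any $t\in S\setminus\mathfrak{n}$ written as $t = a/b$ with $a,b\in\hat{Z}$, $b\notin\mathfrak{m}$, must have $a\notin\mathfrak{m}$ (else $tb = a \in \mathfrak{n}$ would force $t\in\mathfrak{n}$), hence $t$ is a unit in $\hat{Z}_\mathfrak{m}$. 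Since $S$ is generated as a $k$-algebra by $\bar{\tau}_{\psi}$-images of cycles, it then suffices to show that every monomial $g\in S$ admits $b\in\hat{Z}\setminus\mathfrak{m}$ with $gb\in\hat{Z}$. If $\sigma\notin\mathfrak{m}$, Lemma \ref{cyclelemma2}.4 provides $m\geq 0$ with $g\sigma^m\in\hat{Z}$, and I take $b=\sigma^m$. If $\sigma\in\mathfrak{m}$, then $\mathfrak{m}$ contains the monomial $\sigma$, so the hypothesis of Lemma \ref{contains a non-constant monomial} must fail (else $\mathfrak{m}=\mathfrak{z}_0$); hence some monomial of $\hat{Z}$ is not divisible by $\sigma$, and Lemma \ref{a cycle q} then supplies a monomial $h\in\hat{Z}\setminus\mathfrak{m}$ with $\sigma\nmid h$. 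I would first apply Lemma \ref{cyclelemma2}.5 iteratively to strip factors of $\sigma$ off $g$ and reduce to the case $\sigma\nmid g$ (any such factor passes freely into $\hat{Z}_\mathfrak{m}$ since $\sigma\in\hat{Z}$). For such $g$, Lemma \ref{cyclelemma2}.1 gives $gh\in R$, and Lemma \ref{cyclelemma2}.2 promotes this to $gh\in\hat{Z}$ exactly when $\sigma\nmid gh$, i.e.\ when some perfect matching $D$ lies in neither the support of $g$ nor of $h$.

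I expect the main obstacle to be ensuring this matching-support condition: Lemma \ref{a cycle q} as stated yields only one $h$ with no control over which matching it avoids, so in principle the support of $h$ may cover every matching outside that of $g$. Overcoming this step requires exploiting the freedom to replace $h$ by a suitable product or sum of monomials in $\hat{Z}\setminus\mathfrak{m}$, or a refined form of Lemma \ref{a cycle q} producing $h$ whose complement of support meets that of $g$. Once this construction is in hand, the biconditional is proven, and then $U_{S/\hat{Z}} = \kappa_{S/\hat{Z}}^{-1}(\operatorname{Max}\hat{Z}\setminus\{\mathfrak{z}_0\})$; combined with surjectivity of $\kappa_{S/\hat{Z}}$, this yields $\kappa_{S/\hat{Z}}(U_{S/\hat{Z}}) = \operatorname{Max}\hat{Z}\setminus\{\mathfrak{z}_0\}$. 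The identical argument with $R$ and $\mathfrak{m}_0$ in place of $\hat{Z}$ and $\mathfrak{z}_0$ completes the proposition.
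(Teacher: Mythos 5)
Your reduction to showing $S\subseteq\hat{Z}_{\mathfrak{m}}$, your treatment of the case $\sigma\notin\mathfrak{m}$ via Lemma \ref{cyclelemma2}.4, the backward implication via Propositions \ref{local nonnoetherian} and Lemma \ref{S noetherian}, and the final passage to the image statement via Lemma \ref{max ideal} are all correct and match the paper. But the step you flag as "the main obstacle" is a genuine gap, and it is the heart of the proposition: when $\sigma\in\mathfrak{m}$ and $g\in S\setminus\hat{Z}$ is a monomial with $\sigma\nmid g$, you need to place $g$ in $\hat{Z}_{\mathfrak{m}}$, and the pair $(g,h)$ with $h\in\hat{Z}\setminus\mathfrak{m}$, $\sigma\nmid h$ from Lemma \ref{a cycle q} gives $gh\in R$ but not $gh\in\hat{Z}$, precisely because $\sigma$ (the product of \emph{all} variables $x_D$) may divide $gh$ even though it divides neither factor. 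Your proposed fixes (replacing $h$ by products or sums of monomials in $\hat{Z}\setminus\mathfrak{m}$, or hoping for a refined Lemma \ref{a cycle q} controlling the support of $h$) stay at the level of monomials in $B$ and do not obviously succeed; nothing in the quoted lemmas lets you steer the matching support of $h$ away from the complement of the support of $g$.

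The paper resolves this by descending to the quiver. It first shows that the homology classes $u,v\in\mathbb{Z}^2$ of the cycles $p,q$ with $\overbar{p}=g$, $\overbar{q}=h$ are distinct and nonzero (using Lemma \ref{cyclelemma}.2 and $g\notin\hat{Z}$), so their lifts to $Q^+$ intersect at a vertex $j^+$. Splicing $p$ and $q$ there produces, for \emph{each} $i\in Q_0$, a cycle $r=q_2p_1p_2q_1\in e_iAe_i$ with $\overbar{r}=gh$. One then excises from $r$ a maximal cyclic subpath $s$ with $\overbar{s}=\sigma^{\ell}$ (the full power of $\sigma$ dividing $gh$), leaving a cycle $t$ at $i$ with $\sigma\nmid\overbar{t}$; since this can be done at every vertex, $\overbar{t}\in R$, and Lemma \ref{cyclelemma2}.2 then gives $\overbar{t}\in\hat{Z}$. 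Hence $g=\sigma^{\ell}\,\overbar{t}\,\overbar{q}^{\,-1}\in\hat{Z}_{\mathfrak{m}}$. The divisibility obstruction is thus absorbed into the factor $\sigma^{\ell}\in\hat{Z}$ rather than avoided. A secondary point: for the $R$-statement the paper does not rerun a "parallel" argument (Lemmas \ref{contains a non-constant monomial} and \ref{a cycle q} are only stated for $\hat{Z}$); it deduces $\mathfrak{n}\cap\hat{Z}\neq\mathfrak{z}_0$ from $\mathfrak{n}\cap R\neq\mathfrak{m}_0$ via Lemma \ref{cyclelemma2}.3 and then sandwiches $R_{\mathfrak{n}\cap R}$ between $\hat{Z}_{\mathfrak{n}\cap\hat{Z}}$ and $S_{\mathfrak{n}}$ using the inclusion (\ref{ZKA}). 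You should adopt that route rather than duplicating the $\hat{Z}$ argument.
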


\begin{proof} \ \\
\indent (i) Set $\mathfrak{m} := \mathfrak{n} \cap \hat{Z}$, and suppose $\mathfrak{m} \not = \mathfrak{z}_0$.
We first claim that 
\begin{equation} \label{S subseteq}
S \subset \hat{Z}_{\mathfrak{m}}.
\end{equation}

Consider $g \in S \setminus \hat{Z}$. 
By \cite[Proposition 5.14]{B2}, $S$ is generated by $\sigma$ and a set of monomials in $B$ not divisible by $\sigma$. 
Furthermore, $\sigma$ is in $\hat{Z}$. 
It thus suffices to suppose that $g$ is a monomial which is not divisible by $\sigma$; let $p$ be a cycle for which $\overbar{p} = g$.

(i.a) First suppose $\sigma$ does not divide all monomials in $\hat{Z}$.
By Lemma \ref{a cycle q}, there is a nontrivial cycle $q \in A$ such that
$$\overbar{q} \in \hat{Z} \setminus \mathfrak{m} \ \ \ \text{ and } \ \ \ \sigma \nmid \overbar{q}.$$
Let $u,v \in \mathbb{Z}^2$ be such that
$$p \in \mathcal{C}^u \ \ \ \text{ and } \ \ \ q \in \mathcal{C}^v.$$

If $u,v$ are linearly independent over $\mathbb{R}$, then $\overbar{p} \overbar{q}$ is in $\hat{Z}$, by Lemma \ref{notg}.
Whence
\begin{equation*} \label{p=2}
g = \overbar{p} = (\overbar{p} \overbar{q}) \overbar{q}^{-1} \in \hat{Z}_{\mathfrak{m}}.
\end{equation*}
If instead $u,v$ are linearly dependent over $\mathbb{R}$, then there is a cycle $r$ and integers $m,n \geq 1$ such that
$$r^n = q, \ \ \ \ \overbar{r} \in \hat{Z}, \ \ \ \ \overbar{p} \overbar{r}^m \in \hat{Z},$$
by Lemma \ref{iveupyet}.
Furthermore, $\overbar{r} \not \in \mathfrak{m}$ since $\overbar{r}^n = \overbar{q}$ and $\overbar{q} \not \in \mathfrak{m}$.
Thus,
\begin{equation*} \label{notyet}
g = \overbar{p} = (\overbar{p} \overbar{r}^m) \overbar{r}^{-m} \in \hat{Z}_{\mathfrak{m}}.
\end{equation*}
Therefore, in either case, $g$ is in the localization $\hat{Z}_{\mathfrak{m}}$. 
Consequently, (\ref{S subseteq}) holds if $\sigma$ does not divide all monomials in $\hat{Z}$.

(i.b) Now suppose $\sigma$ divides all monomials in $\hat{Z}$.
Then $\mathfrak{m}$ does not contain any monomials since $\mathfrak{m} \not = \mathfrak{z}_0$, by Lemma \ref{contains a non-constant monomial}.
In particular, $\sigma \not \in \mathfrak{m}$.
By Lemma \ref{cyclelemma2}.4, there is an $n \geq 0$ such that $g \sigma^n \in \hat{Z}$.
Thus
$$g = (g \sigma^n) \sigma^{-n} \in \hat{Z}_{\mathfrak{m}}.$$ 
Therefore (\ref{S subseteq}) also holds if $\sigma$ divides all monomials in $\hat{Z}$.

(ii) Denote by $\tilde{\mathfrak{m}} := \mathfrak{m} \hat{Z}_{\mathfrak{m}}$ the maximal ideal of $\hat{Z}_{\mathfrak{m}}$.
Then, since $\hat{Z} \subset S$, we have
$$\hat{Z}_{\mathfrak{m}} = \hat{Z}_{\tilde{\mathfrak{m}} \cap \hat{Z}} \subseteq S_{\tilde{\mathfrak{m}} \cap S} \stackrel{\textsc{(i)}}{\subseteq} (\hat{Z}_{\mathfrak{m}})_{\tilde{\mathfrak{m}} \cap \hat{Z}_{\mathfrak{m}}} = (\hat{Z}_{\mathfrak{m}})_{\mathfrak{m}\hat{Z}_{\mathfrak{m}}} = \hat{Z}_{\mathfrak{m}},$$
where (\textsc{i}) holds by (\ref{S subseteq}).
Therefore
$$S_{\mathfrak{n}} = \hat{Z}_{\mathfrak{m}}.$$

(iii) Now suppose $\mathfrak{n} \cap R \not = \mathfrak{m}_0$.
We claim that $R_{\mathfrak{n} \cap R} = S_{\mathfrak{n}}$.

Since $\mathfrak{n} \cap R \not = \mathfrak{m}_0$, there is a monomial $g \in R \setminus \mathfrak{n}$. 
Thus there is some $n \geq 1$ such that $g^n \in \hat{Z}$, by Lemma \ref{cyclelemma2}.3. 
Furthermore, $g^n \not \in \mathfrak{n}$ since $\mathfrak{n}$ is a prime ideal.
Consequently,
$$g^n \in \hat{Z} \setminus (\mathfrak{n} \cap \hat{Z}).$$
Whence
\begin{equation} \label{today}
\mathfrak{n} \cap \hat{Z} \not = \mathfrak{z}_0.
\end{equation}
Therefore
$$S_{\mathfrak{n}} \stackrel{\textsc{(i)}}{=} \hat{Z}_{\mathfrak{n} \cap \hat{Z}} \stackrel{\textsc{(ii)}}{\subseteq} R_{\mathfrak{n} \cap R} \subseteq S_{\mathfrak{n}},$$
where (\textsc{i}) holds by  (\ref{today}) and Claim (i), and (\textsc{ii}) holds by (\ref{ZKA}). 
It follows that $R_{\mathfrak{n} \cap R} = S_{\mathfrak{n}}$.

(iv) Finally, we claim that
$$\hat{Z}_{\mathfrak{z}_0} \not = S_{\mathfrak{n}_0} \ \ \text{ and } \ \ R_{\mathfrak{m}_0} \not = S_{\mathfrak{n}_0}.$$
These inequalities hold since the local algebras
$\hat{Z}_{\mathfrak{z}_0}$ and $R_{\mathfrak{m}_0}$ are nonnoetherian by Proposition \ref{local nonnoetherian}, whereas $S_{\mathfrak{n}}$ is noetherian by Lemma \ref{S noetherian}.
\end{proof}

\begin{Lemma} \label{n cap Z = n' cap Z}
Let $\mathfrak{q}$ and $\mathfrak{q}'$ be prime ideals of $S$.
Then
$$\mathfrak{q} \cap \hat{Z} = \mathfrak{q}' \cap \hat{Z} \ \ \text{ if and only if } \ \ \mathfrak{q} \cap R = \mathfrak{q}' \cap R.$$
\end{Lemma}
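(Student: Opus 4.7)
The implication $(\Leftarrow)$ is immediate by intersecting with $\hat{Z} \subseteq R$. For the converse, set $\mathfrak{p} := \mathfrak{q} \cap \hat{Z} = \mathfrak{q}' \cap \hat{Z}$. Since $\sigma \in \hat{Z}$, either $\sigma$ lies in both $\mathfrak{q}$ and $\mathfrak{q}'$ (iff $\sigma \in \mathfrak{p}$) or in neither, and I would split the argument along this dichotomy.

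Case $\sigma \notin \mathfrak{p}$: The plan is to show $S_\sigma = \hat{Z}_\sigma$, which immediately forces $\mathfrak{q} = \mathfrak{q}'$. By Lemma~\ref{cyclelemma2}(4), each monomial $g \in S$ satisfies $g\sigma^m \in \hat{Z}$ for some $m \geq 0$, so $g \in \hat{Z}_\sigma$; since $S$ is $k$-spanned by its monomials, $S \subseteq \hat{Z}_\sigma$ and hence $S_\sigma = \hat{Z}_\sigma$. The primes of $S$ avoiding $\sigma$ therefore biject, via contraction, with the primes of $\hat{Z}$ avoiding $\sigma$; since $\mathfrak{q}$ and $\mathfrak{q}'$ both contract to $\mathfrak{p}$, they must be equal, and in particular $\mathfrak{q} \cap R = \mathfrak{q}' \cap R$.

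Case $\sigma \in \mathfrak{p}$: The plan is a direct monomial decomposition of any $f \in \mathfrak{q} \cap R$. Writing $f = \sum_i c_i m_i$ as a $k$-linear combination of distinct monomials $m_i \in R$, split $f = a + b$ with $a := \sum_{\sigma \nmid m_i} c_i m_i$ and $b := \sum_{\sigma \mid m_i} c_i m_i$. By Lemma~\ref{cyclelemma2}(2), each $m_i$ appearing in $a$ lies in $\hat{Z}$, so $a \in \hat{Z}$; by Lemma~\ref{cyclelemma2}(5), each monomial in $b$ satisfies $m_i/\sigma \in S$, so $b = \sigma s$ for some $s \in S$. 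Since $\sigma \in \mathfrak{q}$, $b \in \mathfrak{q}$, and hence $a = f - b \in \mathfrak{q} \cap \hat{Z} = \mathfrak{p} \subseteq \mathfrak{q}'$. Since $\sigma \in \mathfrak{q}'$ as well, $b \in \mathfrak{q}'$, and therefore $f = a + b \in \mathfrak{q}' \cap R$. By symmetry, $\mathfrak{q} \cap R = \mathfrak{q}' \cap R$.

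The main subtlety lies in identifying $\sigma$ as the correct element for the case split, so that the two cases trivialise for different reasons: Lemma~\ref{cyclelemma2}(4) collapses $S$ into $\hat{Z}_\sigma$ after inverting $\sigma$, while on the locus $\{\sigma = 0\}$, Lemmas~\ref{cyclelemma2}(2) and~(5) let any element of $R$ be decomposed into a piece in $\hat{Z}$ and a piece in $\sigma S$, both of which automatically lie in any prime of $S$ that contains $\sigma$ and contracts to $\mathfrak{p}$. Notably, the intermediate ring $R$ never needs separate analysis: in Case~1 we get $\mathfrak{q} = \mathfrak{q}'$ outright, and in Case~2 the argument manipulates $f \in R$ directly via its monomial expansion.
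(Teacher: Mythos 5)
Your proof is correct, but it takes a genuinely different route from the paper's. The paper's argument for $(\Rightarrow)$ is a two-line radical trick: for $s\in\mathfrak{q}\cap R$, Lemma \ref{cyclelemma2}.3 gives $s^n\in\hat{Z}$ for some $n\geq 1$, so $s^n\in\mathfrak{q}\cap\hat{Z}=\mathfrak{q}'\cap\hat{Z}\subseteq\mathfrak{q}'$, and primality of $\mathfrak{q}'$ forces $s\in\mathfrak{q}'$; no case distinction on $\sigma$ is needed. Your argument avoids Lemma \ref{cyclelemma2}.3 entirely and instead splits on whether $\sigma$ lies in $\mathfrak{p}$, using Lemma \ref{cyclelemma2}.4 to get $S_\sigma=\hat{Z}_\sigma$ when $\sigma\notin\mathfrak{p}$ (which yields the stronger conclusion $\mathfrak{q}=\mathfrak{q}'$ in that case --- essentially re-deriving the key mechanism behind Proposition \ref{n in maxs}), and Lemmas \ref{cyclelemma2}.2 and \ref{cyclelemma2}.5 for the decomposition $f=a+b$ with $a\in\hat{Z}$, $b\in\sigma S$ when $\sigma\in\mathfrak{p}$. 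Both are valid; the paper's is shorter and uniform, while yours gives finer structural information off the locus $\{\sigma=0\}$. Two small points you should make explicit: (a) in Case 2 you assert that every monomial appearing in the $B$-expansion of $f\in R$ itself lies in $R$; this is true because $R$ is generated by the subspaces $\bar{\tau}_{\psi}(e_iAe_i)$, each of which is spanned by monomials of $B$, so $R$ is spanned by the monomials it contains --- but it is used silently; (b) in Case 1 the identification of the localization bijection with contraction ($\mathfrak{q}S_\sigma\cap\hat{Z}=\mathfrak{q}\cap\hat{Z}$) deserves a sentence. Neither is a gap, just a missing line of justification.
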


\begin{proof}
(i) Suppose $\mathfrak{q} \cap \hat{Z} = \mathfrak{q}' \cap \hat{Z}$, and let $s \in \mathfrak{q} \cap R$.
Then $s \in R$.
Whence there is some $n \geq 1$ such that $s^n \in \hat{Z}$, by Lemma \ref{cyclelemma2}.3. 
Thus
$$s^n \in \mathfrak{q} \cap \hat{Z} = \mathfrak{q}' \cap \hat{Z}.$$
Therefore $s^n \in \mathfrak{q}'$.
Thus $s \in \mathfrak{q}'$ since $\mathfrak{q}'$ is prime.
Consequently, $s \in \mathfrak{q}' \cap R$.
Therefore $\mathfrak{q} \cap R \subseteq \mathfrak{q}' \cap R$.
Similarly, $\mathfrak{q} \cap R \supseteq \mathfrak{q}' \cap R$.

(ii) Now suppose $\mathfrak{q} \cap R = \mathfrak{q}' \cap R$, and let $s \in \mathfrak{q} \cap \hat{Z}$.
Then $s \in \hat{Z} \subseteq R$.
Thus
$$s \in \mathfrak{q} \cap R = \mathfrak{q}' \cap R.$$
Whence $s \in \mathfrak{q}' \cap \hat{Z}$.
Therefore $\mathfrak{q} \cap \hat{Z} \subseteq \mathfrak{q}' \cap \hat{Z}$.
Similarly, $\mathfrak{q} \cap \hat{Z} \supseteq \mathfrak{q}' \cap \hat{Z}$.
\end{proof}

\begin{Proposition} \label{coincide prop}
The subsets $U_{S/\hat{Z}}$ and $U_{S/R}$ of $\operatorname{Max}S$ are equal.
\end{Proposition}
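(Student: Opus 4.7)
The plan is to combine Proposition \ref{n in maxs} with Lemma \ref{n cap Z = n' cap Z} to reduce the equality $U_{S/\hat{Z}} = U_{S/R}$ to a statement purely about the special maximal ideals $\mathfrak{z}_0$ and $\mathfrak{m}_0$. Concretely, Proposition \ref{n in maxs} characterizes membership in each of these subsets of $\operatorname{Max}S$ by a single exclusion: $\mathfrak{n} \in U_{S/\hat{Z}}$ if and only if $\mathfrak{n} \cap \hat{Z} \neq \mathfrak{z}_0$, and $\mathfrak{n} \in U_{S/R}$ if and only if $\mathfrak{n} \cap R \neq \mathfrak{m}_0$. So it suffices to prove that for $\mathfrak{n} \in \operatorname{Max}S$, the condition $\mathfrak{n} \cap \hat{Z} = \mathfrak{z}_0$ is equivalent to $\mathfrak{n} \cap R = \mathfrak{m}_0$.

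The key observation is that $\mathfrak{z}_0 = \mathfrak{n}_0 \cap \hat{Z}$ and $\mathfrak{m}_0 = \mathfrak{n}_0 \cap R$ by definition, so both conditions are precisely the hypotheses of Lemma \ref{n cap Z = n' cap Z} applied with $\mathfrak{q} = \mathfrak{n}$ and $\mathfrak{q}' = \mathfrak{n}_0$. That lemma then gives the required equivalence in one line.

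Thus the proof I would write is essentially a two-step chain: first invoke Proposition \ref{n in maxs} to rewrite each of $U_{S/\hat{Z}}$ and $U_{S/R}$ as a complement in $\operatorname{Max}S$ of the fibre over $\mathfrak{z}_0$ (respectively $\mathfrak{m}_0$) under $\kappa_{S/\hat{Z}}$ (respectively $\kappa_{S/R}$); then apply Lemma \ref{n cap Z = n' cap Z} to see that these two fibres coincide. There is no real obstacle here, since all the substantive content has already been absorbed into the two preceding results: Proposition \ref{n in maxs} carries the heavy commutative-algebra argument about when localizations agree, and Lemma \ref{n cap Z = n' cap Z} handles the passage between $\hat{Z}$ and $R$ using the integrality-type fact from Lemma \ref{cyclelemma2}.3 that every element of $R$ has a power lying in $\hat{Z}$. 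The main thing to be careful about is simply to state the equivalence in the correct direction and to cite the results in the right order.
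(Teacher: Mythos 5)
Your proposal is correct and uses the same two key ingredients as the paper: Proposition \ref{n in maxs} to reduce membership in each set to the condition $\mathfrak{n}\cap\hat{Z}\neq\mathfrak{z}_0$ (resp.\ $\mathfrak{n}\cap R\neq\mathfrak{m}_0$), and Lemma \ref{n cap Z = n' cap Z} (with $\mathfrak{q}=\mathfrak{n}$, $\mathfrak{q}'=\mathfrak{n}_0$) to identify the two excluded fibres. The paper argues the inclusion $U_{S/\hat{Z}}\subseteq U_{S/R}$ directly from $\hat{Z}\subseteq R\subseteq S$ and only uses this machinery for the reverse inclusion, but your symmetric version is an entirely valid, essentially identical argument.
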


\begin{proof}
(i) We first claim that
$$U_{S/\hat{Z}} \subseteq U_{S/R}.$$
Indeed, suppose $\mathfrak{n} \in U_{S/\hat{Z}}$.
Then since $\hat{Z} \subseteq R \subseteq S$, we have
$$S_{\mathfrak{n}} = \hat{Z}_{\mathfrak{n} \cap \hat{Z}} \subseteq R_{\mathfrak{n} \cap R} \subseteq S_{\mathfrak{n}}.$$
Thus
$$R_{\mathfrak{n} \cap R} = S_{\mathfrak{n}}.$$
Therefore $\mathfrak{n} \in U_{S/R}$, proving our claim.

(ii) We now claim that
$$U_{S/R} \subseteq U_{S/\hat{Z}}.$$
Let $\mathfrak{n} \in U_{S/R}$.
Then $R_{\mathfrak{n} \cap R} = S_{\mathfrak{n}}$.
Thus by Proposition \ref{n in maxs},
$$\mathfrak{n} \cap R \not = \mathfrak{n}_0 \cap R.$$
Therefore by Lemma \ref{n cap Z = n' cap Z},
$$\mathfrak{n} \cap \hat{Z} \not = \mathfrak{n}_0 \cap \hat{Z}.$$
But then again by Proposition \ref{n in maxs},
$$\hat{Z}_{\mathfrak{n} \cap \hat{Z}} = S_{\mathfrak{n}}.$$
Whence $\mathfrak{n} \in U_{S/\hat{Z}}$, proving our claim.
\end{proof}

We denote the complement of a set $W \subseteq \operatorname{Max}S$ by $W^c$. 

\begin{Theorem} \label{isolated sing}
The following subsets of $\operatorname{Max}S$ are open, dense, and coincide:
\begin{equation} \label{coincide}
\begin{array}{c}
U^*_{S/\hat{Z}} = U_{S/\hat{Z}} = U^*_{S/R} = U_{S/R}\\
 = \kappa_{S/\hat{Z}}^{-1}(\operatorname{Max}\hat{Z} \setminus \left\{ \mathfrak{z}_0 \right\} ) = \kappa_{S/R}^{-1}\left(\operatorname{Max}R \setminus \left\{ \mathfrak{m}_0 \right\} \right)\\
= \mathcal{Z}_S(\mathfrak{z}_0S)^c = \mathcal{Z}_S(\mathfrak{m}_0S)^c.
\end{array}
\end{equation}
In particular, $\hat{Z}$ and $R$ are locally noetherian at all points of $\operatorname{Max}\hat{Z}$ and $\operatorname{Max}R$ except at $\mathfrak{z}_0$ and $\mathfrak{m}_0$.  
\end{Theorem}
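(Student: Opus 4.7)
The plan is to assemble the string of equalities in (\ref{coincide}) from the propositions and lemmas already established, and then deduce openness and density from the fact that the set is the complement of a proper Zariski-closed subset of the irreducible variety $\operatorname{Max}S$. The backbone is Proposition \ref{n in maxs}, which identifies the fibres $\kappa_{S/\hat{Z}}^{-1}(\mathfrak{z}_0)$ and $\kappa_{S/R}^{-1}(\mathfrak{m}_0)$ as exactly the maximal ideals $\mathfrak{n}$ for which the local equalities $\hat{Z}_{\mathfrak{n}\cap\hat{Z}}=S_{\mathfrak{n}}$ and $R_{\mathfrak{n}\cap R}=S_{\mathfrak{n}}$ fail; Proposition \ref{coincide prop} then merges the two conditions.

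First I would rewrite the defining conditions of $U_{S/\hat{Z}}$ and $U_{S/R}$ using Proposition \ref{n in maxs}: a maximal ideal $\mathfrak{n}$ lies in $U_{S/\hat{Z}}$ (respectively, $U_{S/R}$) iff $\mathfrak{n}\cap\hat{Z}\neq\mathfrak{z}_0$ (respectively, $\mathfrak{n}\cap R\neq\mathfrak{m}_0$). This immediately identifies $U_{S/\hat{Z}}$ with $\kappa_{S/\hat{Z}}^{-1}(\operatorname{Max}\hat{Z}\setminus\{\mathfrak{z}_0\})$ and $U_{S/R}$ with $\kappa_{S/R}^{-1}(\operatorname{Max}R\setminus\{\mathfrak{m}_0\})$. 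To replace each of these by $\mathcal{Z}_S(\mathfrak{z}_0S)^c$ and $\mathcal{Z}_S(\mathfrak{m}_0S)^c$, observe that $\mathfrak{n}\cap\hat{Z}=\mathfrak{z}_0$ exactly when the maximal ideal $\mathfrak{n}\cap\hat{Z}$ contains $\mathfrak{z}_0$, i.e.\ when $\mathfrak{z}_0\subseteq\mathfrak{n}$, i.e.\ when $\mathfrak{z}_0S\subseteq\mathfrak{n}$; the same holds verbatim with $\mathfrak{m}_0$ in place of $\mathfrak{z}_0$. Combined with Proposition \ref{coincide prop}, all six sets listed in the statement except $U^*_{S/\hat{Z}}$ and $U^*_{S/R}$ coincide.

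Next I would upgrade $U_{S/\hat{Z}}$ to $U^*_{S/\hat{Z}}$ (and likewise for $R$): the inclusion $U_{S/\hat{Z}}\subseteq U^*_{S/\hat{Z}}$ is automatic since $S_{\mathfrak{n}}$ is noetherian by Lemma \ref{S noetherian}. For the reverse, if $\hat{Z}_{\mathfrak{n}\cap\hat{Z}}$ is noetherian, then $\mathfrak{n}\cap\hat{Z}\neq\mathfrak{z}_0$ by Proposition \ref{local nonnoetherian}, so Proposition \ref{n in maxs} forces $\hat{Z}_{\mathfrak{n}\cap\hat{Z}}=S_{\mathfrak{n}}$, i.e.\ $\mathfrak{n}\in U_{S/\hat{Z}}$. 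The $R$-case is identical.

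Finally, openness follows for free since the common set equals the Zariski-open complement $\mathcal{Z}_S(\mathfrak{z}_0S)^c$. For density, note that $\operatorname{Max}S$ is irreducible because $S$ is a finite-type integral domain (Lemma \ref{S noetherian}), so it suffices to check nonemptiness. Since $\sigma\in\mathfrak{z}_0$ and $\sigma$ is not nilpotent in $S$, the Jacobson property of $S$ gives a maximal ideal $\mathfrak{n}\in\operatorname{Max}S$ with $\sigma\notin\mathfrak{n}$, hence $\mathfrak{z}_0S\not\subseteq\mathfrak{n}$; the same $\mathfrak{n}$ witnesses $\mathfrak{m}_0S\not\subseteq\mathfrak{n}$. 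The last sentence of the theorem is then a restatement: for $\mathfrak{m}\in\operatorname{Max}\hat{Z}\setminus\{\mathfrak{z}_0\}$, choose any $\mathfrak{n}\in\kappa_{S/\hat{Z}}^{-1}(\mathfrak{m})$ by surjectivity (Lemma \ref{max ideal}); then $\mathfrak{n}\in U^*_{S/\hat{Z}}$ gives $\hat{Z}_{\mathfrak{m}}=S_{\mathfrak{n}}$, which is noetherian, and likewise for $R$. The only step that requires genuine input beyond bookkeeping is the equality $U=U^*$, which rests on the nonnoetherianness of the local rings at the origin established in Proposition \ref{local nonnoetherian}; I expect no further obstacle.
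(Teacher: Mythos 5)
Your proposal is correct and follows essentially the same route as the paper's proof: Proposition \ref{n in maxs} identifies $U_{S/\hat{Z}}$ and $U_{S/R}$ with the complements of the fibres over $\mathfrak{z}_0$ and $\mathfrak{m}_0$, Proposition \ref{coincide prop} merges them, Proposition \ref{local nonnoetherian} supplies the reverse inclusion $U^*\subseteq U$, the maximality of $\mathfrak{z}_0$ and $\mathfrak{m}_0$ gives the zero-locus description, and density follows from irreducibility of $\operatorname{Max}S$ plus nonemptiness. The only (harmless) deviation is your use of the Jacobson property of $S$ and the element $\sigma$ to witness nonemptiness, where the paper instead invokes surjectivity of $\kappa_{S/R}$ together with the existence of a maximal ideal of $R$ other than $\mathfrak{m}_0$.
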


\begin{proof}
For brevity, set $\mathcal{Z}(I) := \mathcal{Z}_S(I)$.

(i) We first show the equalities of the top two lines of (\ref{coincide}).

By Proposition \ref{coincide prop}, $U_{S/R} = U_{S/\hat{Z}}$.

By Lemma \ref{S noetherian}, $S$ is noetherian.
Thus for each $\mathfrak{n} \in \operatorname{Max}S$, the localization $S_{\mathfrak{n}}$ is noetherian.
Therefore, by Proposition \ref{n in maxs}, 
$$U^*_{S/\hat{Z}} = U_{S/\hat{Z}} \ \ \text{ and } \ \ U^*_{S/R} = U_{S/R}.$$
Moreover, again by Proposition \ref{n in maxs},
$$U_{S/\hat{Z}} = \kappa_{S/\hat{Z}}^{-1}( \operatorname{Max}\hat{Z} \setminus \left\{ \mathfrak{z}_0 \right\}) \ \ \text{ and } \ \ U_{S/R} = \kappa_{S/R}^{-1}\left( \operatorname{Max}R \setminus \left\{ \mathfrak{m}_0 \right\} \right).$$

(ii) We now claim that the complement of $U_{S/R} \subset \operatorname{Max}S$ is the zero locus $\mathcal{Z}(\mathfrak{m}_0S)$.
Suppose $\mathfrak{n} \in \mathcal{Z}(\mathfrak{m}_0S)$; then $\mathfrak{m}_0S \subseteq \mathfrak{n}$.
Whence,
\begin{equation*} \label{m0 = m0 cap R}
\mathfrak{m}_0 \subseteq \mathfrak{m}_0S \cap R \subseteq \mathfrak{n} \cap R.
\end{equation*}
Thus $\mathfrak{n} \cap R = \mathfrak{m}_0$ since $\mathfrak{m}_0$ is a maximal ideal of $R$.
But then $\mathfrak{n} \not \in U_{S/R}$ by Claim (i).
Therefore $U^c_{S/R} \supseteq \mathcal{Z}(\mathfrak{m}_0S)$.

Conversely, suppose $\mathfrak{n} \in U_{S/R}^c$.
Then $\kappa_{S/R}(\mathfrak{n}) \not \in \kappa_{S/R}(U_{S/R})$, by the definition of $U_{S/R}$.
Thus $\mathfrak{n} \cap R = \kappa_{S/R}(\mathfrak{n}) = \mathfrak{m}_0$, by Claim (i).
Whence,
$$\mathfrak{m}_0S = (\mathfrak{n} \cap R)S \subseteq \mathfrak{n},$$
and so $\mathfrak{n} \in \mathcal{Z}(\mathfrak{m}_0S)$.
Therefore $U^c_{S/R} \subseteq \mathcal{Z}(\mathfrak{m}_0S)$.

(iii) Finally, we claim that the subsets (\ref{coincide}) are open dense.
Indeed, there is a maximal ideal of $R$ distinct from $\mathfrak{m}_0$, and $\kappa_{S/R}$ is surjective by Lemma \ref{max ideal}.
Thus the set $\kappa_{S/R}^{-1}\left( \operatorname{Max}R \setminus \left\{ \mathfrak{m}_0 \right\} \right)$ is nonempty.
Moreover, this set equals the open set $\mathcal{Z}(\mathfrak{m}_0S)^c$, by Claim (ii).
But $S$ is an integral domain.
Therefore $\mathcal{Z}(\mathfrak{m}_0S)^c$ is dense since it is nonempty and open. 
\end{proof}

\begin{Theorem} \label{generically noetherian}
The center $Z$, reduced center $\hat{Z}$, and ghor center $R$ of $A$ each have Krull dimension 3,
$$\operatorname{dim}Z = \operatorname{dim}\hat{Z} = \operatorname{dim}R = \operatorname{dim}S = 3.$$
Furthermore, the fraction fields of $\hat{Z}$, $R$, and $S$ coincide,
\begin{equation} \label{function fields}
\operatorname{Frac}\hat{Z} = \operatorname{Frac}R = \operatorname{Frac}S.
\end{equation}
\end{Theorem}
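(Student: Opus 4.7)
The plan is to derive both conclusions from Theorem \ref{isolated sing}. For the fraction field equality (\ref{function fields}), I pick any $\mathfrak{n}$ in the nonempty open dense subset $U_{S/\hat{Z}} = U_{S/R}$ of $\operatorname{Max}S$. Theorem \ref{isolated sing} then gives $\hat{Z}_{\mathfrak{n} \cap \hat{Z}} = R_{\mathfrak{n} \cap R} = S_{\mathfrak{n}}$, and since taking fraction fields commutes with localization of an integral domain, $\operatorname{Frac}\hat{Z} = \operatorname{Frac}R = \operatorname{Frac}S$.

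Next, for the Krull dimensions: $\operatorname{dim}S = 3$ is exactly Lemma \ref{Jess}, and $\operatorname{dim}Z = \operatorname{dim}\hat{Z}$ follows since $\operatorname{nil}Z$ sits inside every prime of $Z$, so $\operatorname{Spec}Z$ and $\operatorname{Spec}\hat{Z}$ are homeomorphic. For the lower bounds $\operatorname{dim}R \geq 3$ and $\operatorname{dim}\hat{Z} \geq 3$, I would choose any maximal $\mathfrak{n} \in U_{S/R}$; since $S$ is a finite type integral $k$-algebra of dimension $3$, every such $\mathfrak{n}$ has height $3$ by Noether normalization, so the equality of local rings $R_{\mathfrak{n} \cap R} = S_{\mathfrak{n}}$ gives $\operatorname{ht}_R(\mathfrak{n} \cap R) = 3$, and the same argument applies to $\hat{Z}$.

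The main obstacle is the upper bound $\operatorname{dim}R, \operatorname{dim}\hat{Z} \leq 3$, since both rings are nonnoetherian and chains of primes in them need not lift to chains in $S$ in any obvious fashion. I would handle this by the classical inequality $\operatorname{dim}D \leq \operatorname{trdeg}_k\operatorname{Frac}D$, valid for any integral domain $k$-algebra $D$: given a chain $\mathfrak{p}_0 \subsetneq \cdots \subsetneq \mathfrak{p}_n$ in $R$, pick $a_i \in \mathfrak{p}_i \setminus \mathfrak{p}_{i-1}$ and restrict to the finitely generated $k$-subalgebra $R_0 := k[a_1,\ldots,a_n] \subseteq R$. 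The contracted chain $\mathfrak{p}_0 \cap R_0 \subsetneq \cdots \subsetneq \mathfrak{p}_n \cap R_0$ remains strict by choice of the $a_i$, so Noether normalization applied to the finite type domain $R_0$ gives $n \leq \operatorname{dim}R_0 = \operatorname{trdeg}_k\operatorname{Frac}R_0 \leq \operatorname{trdeg}_k\operatorname{Frac}R = \operatorname{trdeg}_k\operatorname{Frac}S = 3$. The same argument bounds $\operatorname{dim}\hat{Z}$, completing the proof.
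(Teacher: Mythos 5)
Your proposal is correct, and its skeleton is identical to the paper's: both arguments hinge on Theorem \ref{isolated sing} supplying a nonempty (open dense) set of maximal ideals $\mathfrak{n} \in \operatorname{Max}S$ at which $\hat{Z}_{\mathfrak{n}\cap\hat{Z}} = R_{\mathfrak{n}\cap R} = S_{\mathfrak{n}}$, together with Lemma \ref{Jess} for $\operatorname{dim}S = 3$ and the observation that $\operatorname{nil}Z$ lies in every prime. The difference is that the paper simply cites \cite[Lemma 2.4]{B5} for the equality of fraction fields and \cite[Theorem 2.5.4]{B5} for the equality of Krull dimensions, whereas you prove both facts inline: the fraction fields coincide because a domain and any localization of it at a prime have the same fraction field; the lower bound $\operatorname{dim}\hat{Z}, \operatorname{dim}R \geq 3$ follows from $\operatorname{ht}_R(\mathfrak{n}\cap R) = \operatorname{dim}R_{\mathfrak{n}\cap R} = \operatorname{dim}S_{\mathfrak{n}} = 3$ (using equidimensionality of the finite type domain $S$); and the upper bound follows from the general inequality $\operatorname{dim}D \leq \operatorname{trdeg}_k\operatorname{Frac}D$ for a domain $D \supseteq k$, which you correctly prove by contracting a strict chain of primes to the finitely generated subalgebra $k[a_1,\ldots,a_n]$ (the contracted chain stays strict since $a_i \in \mathfrak{p}_i\cap R_0 \setminus \mathfrak{p}_{i-1}$). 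All steps check out; what your version buys is a self-contained proof that does not lean on the nonnoetherian-geometry machinery of \cite{B5}, at the cost of a somewhat longer argument than the paper's two citations.
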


\begin{proof}
Recall that $S$ is of finite type by Lemma \ref{S noetherian}, and $\hat{Z} \subseteq R \subseteq S$ are integral domains since they are subalgebras of the polynomial ring $B$.

The sets $U_{S/\hat{Z}}$ and $U_{S/R}$ are nonempty, by Theorem \ref{isolated sing}.
Thus $\hat{Z}$, $R$, and $S$ have equal fraction fields \cite[Lemma 2.4]{B5}; and equal Krull dimensions \cite[Theorem 2.5.4]{B5}.
In particular, $\operatorname{dim} \hat{Z} = \operatorname{dim}R = \operatorname{dim}S = 3$, by Lemma \ref{Jess}.
Finally, each prime $\mathfrak{p} \in \operatorname{Spec}Z$ contains the nilradical $\operatorname{nil}Z$, and thus $\operatorname{dim}Z = \operatorname{dim}\hat{Z}$.
\end{proof}

Recall that the reduction $X_{\operatorname{red}}$ of a scheme $X$, that is, its reduced induced scheme structure, is the closed subspace of $X$ associated to the sheaf of ideals $\mathcal{I}$, where for each open set $U \subset X$,
$$\mathcal{I}(U) := \left\{ f \in \mathcal{O}_X(U) \ | \ f(\mathfrak{p}) = 0 \ \text{ for all } \ \mathfrak{p} \in U \right\}.$$
$X_{\operatorname{red}}$ is the unique reduced scheme whose underlying topological space equals that of $X$.
If $R := \mathcal{O}_X(X)$, then $\mathcal{O}_{X_{\operatorname{red}}}(X_{\operatorname{red}}) = R/\operatorname{nil}R$.

\begin{Theorem} \label{hopefully...} \
Let $A$ be a nonnoetherian dimer algebra, and let $\psi: A \to A'$ a cyclic contraction.
\begin{enumerate}
 \item The reduced center $\hat{Z}$ and ghor center $R$ of $A$ are both depicted by the center $Z' \cong S$ of $A'$.
 \item The affine scheme $\operatorname{Spec}R$ and the reduced scheme of $\operatorname{Spec}Z$ are birational to the noetherian scheme $\operatorname{Spec}S$, and each contain precisely one closed point of positive geometric dimension, namely $\mathfrak{m}_0$ and $\mathfrak{z}_0$.
\end{enumerate}
\end{Theorem}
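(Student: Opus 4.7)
The plan is to unpack the definitions and leverage the earlier lemmas. For part (1), I would verify the two conditions of Definition \ref{depiction def} directly: surjectivity of $\iota_{S/\hat{Z}}$ and $\iota_{S/R}$ on prime spectra is Lemma \ref{max ideal}, and the equality $U_{S/\hat{Z}} = U^*_{S/\hat{Z}} \neq \emptyset$ (together with its analogue for $R$) is Theorem \ref{isolated sing}. The birationality in part (2) is then immediate from Theorem \ref{generically noetherian}, which identifies $\operatorname{Frac}\hat{Z} = \operatorname{Frac}R = \operatorname{Frac}S$, combined with $\operatorname{Spec}S$ being noetherian by Lemma \ref{S noetherian}.

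For the classification of closed points of positive geometric dimension, I would first handle $\mathfrak{m} \in \operatorname{Max}R \setminus \{\mathfrak{m}_0\}$. By Proposition \ref{n in maxs}, every maximal $\mathfrak{n} \in \kappa_{S/R}^{-1}(\mathfrak{m})$ satisfies $R_\mathfrak{m} = S_\mathfrak{n}$; a standard localization argument then forces every prime $\mathfrak{q} \in \iota_{S/R}^{-1}(\mathfrak{m})$ to be maximal. Indeed, choosing any maximal $\mathfrak{n} \supseteq \mathfrak{q}$ in $S$, maximality of $\mathfrak{m}$ gives $\mathfrak{n} \cap R = \mathfrak{m}$, and under $R_\mathfrak{m} = S_\mathfrak{n}$ the prime $\mathfrak{q}S_\mathfrak{n}$ contracts to $\mathfrak{m}R_\mathfrak{m}$, so $\mathfrak{q}S_\mathfrak{n} = \mathfrak{n}S_\mathfrak{n}$ and $\mathfrak{q} = \mathfrak{n}$. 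Since $\dim S = 3$ by Lemma \ref{Jess}, every preimage has height $3$, giving $\operatorname{gdim}(\mathfrak{m}) = 0$. The same argument extends to any other depiction $T$ of $R$, since noetherianity of $R_\mathfrak{m}$ together with the depiction condition forces $T_\mathfrak{n} = R_\mathfrak{m}$ at every maximal preimage, and the localization argument runs identically; the analogous statement for $\mathfrak{z}_0$ follows from Lemma \ref{n cap Z = n' cap Z}.

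The remaining assertion is that $\operatorname{gdim}(\mathfrak{m}_0) > 0$ and $\operatorname{gdim}(\mathfrak{z}_0) > 0$. Using $S$ as the depiction, it suffices to produce a non-maximal prime $\mathfrak{q} \in \operatorname{Spec}S$ with $\mathfrak{q} \cap R = \mathfrak{m}_0$; equivalently, the closed subscheme of $\operatorname{Spec}S$ defined by $\mathfrak{m}_0 S$ must be positive-dimensional. Its closed points form $U_{S/R}^c$ by Theorem \ref{isolated sing}, while $R_{\mathfrak{m}_0}$ is nonnoetherian by Proposition \ref{local nonnoetherian}. If that closed subscheme were zero-dimensional, $\mathfrak{m}_0 S$ would have only finitely many minimal primes (all maximal), and noetherianity of $S_\mathfrak{n}$ at each such $\mathfrak{n}$ would propagate through semilocalization to $R_{\mathfrak{m}_0}$, contradicting Proposition \ref{local nonnoetherian}.

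The hardest part of the plan is this final step — rigorously extracting positive-dimensionality of the exceptional locus from nonnoetherianity at a single closed fiber. A more concrete route uses the monomial $h \in S \setminus R$ of Lemma \ref{cyclelemma2}.6, no power of which lies in $R$; this $h$ should witness a non-maximal prime $\mathfrak{q} \supseteq \mathfrak{m}_0 S$ with $h \notin \mathfrak{q}$, thereby realizing the ``smeared-out curve or surface'' of the introduction as an honest subvariety of $\operatorname{Spec}S$.
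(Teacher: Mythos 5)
Your treatment of part (1), of the birationality claim, and of why the points other than $\mathfrak{m}_0$ and $\mathfrak{z}_0$ have geometric dimension zero is sound and consistent with the paper's use of Lemma \ref{max ideal}, Theorem \ref{isolated sing}, and Theorem \ref{generically noetherian} (indeed, your localization argument for $\operatorname{gdim}(\mathfrak{m})=0$ at $\mathfrak{m}\neq\mathfrak{m}_0$ is spelled out more fully than in the paper). The gap is exactly in the step you yourself flag as hardest: producing a non-maximal prime of $S$ contracting to $\mathfrak{m}_0$. Your primary argument --- that if $\mathcal{Z}_S(\mathfrak{m}_0S)$ were a finite set of maximal ideals, then noetherianity of the semilocalization of $S$ at those points would ``propagate'' to $R_{\mathfrak{m}_0}$ and contradict Proposition \ref{local nonnoetherian} --- does not work as stated: $R_{\mathfrak{m}_0}$ is merely a subring of that semilocalization, noetherianity does not descend to subrings, and there is no module-finiteness available to invoke Eakin--Nagata. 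Your fallback (the monomial $h$ of Lemma \ref{cyclelemma2}.6 ``should witness'' a non-maximal prime over $\mathfrak{m}_0$) names the right object but supplies no construction.

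The paper closes this gap as follows. Take a cycle $p$ with $\overbar{p}=h$ as in Lemma \ref{cyclelemma2}.6, so $\sigma\nmid\overbar{p}$ and $\overbar{p}^{n}\notin R$ for all $n\geq 1$. First, $\overbar{p}\notin\mathfrak{m}_0S$: otherwise $\overbar{p}=gh'$ with $g\in\mathfrak{m}_0$ a nontrivial monomial of $R$ and $h'\in S$; since $\sigma\nmid\overbar{p}$ we have $g\neq\sigma^n$, so Lemma \ref{cyclelemma2}.1 would force $\overbar{p}=gh'\in R$, a contradiction. Consequently, for each $c\in k$ the ideal $(\overbar{p}-c,\mathfrak{m}_0)S$ is proper, hence contained in some $\mathfrak{n}_c\in\operatorname{Max}S$, and each $\mathfrak{n}_c$ contracts to $\mathfrak{m}_0$ by maximality of $\mathfrak{m}_0$. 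Since $k$ is uncountable, the $\mathfrak{n}_c$ form an infinite family, whereas the radical ideal $\bigcap_{c\in k}\mathfrak{n}_c$ of the noetherian ring $S$ has only finitely many minimal primes; hence some minimal prime $\mathfrak{q}_1$ is non-maximal, and it still satisfies $\mathfrak{q}_1\cap R=\mathfrak{m}_0$. Then $\operatorname{ght}(\mathfrak{m}_0)\leq\operatorname{ht}(\mathfrak{q}_1)<\operatorname{dim}S=\operatorname{dim}R$. This uncountability-of-$k$ pigeonhole is the missing ingredient; without it (or some substitute), your proposal does not establish that $\mathfrak{m}_0$ and $\mathfrak{z}_0$ have positive geometric dimension.
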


\begin{proof}
(1) We first claim that $\hat{Z}$ and $R$ are depicted by $S$.
By Theorem \ref{isolated sing},
$$U^*_{S/\hat{Z}} = U_{S/\hat{Z}} \not = \emptyset \ \ \text{ and } \ \ U^*_{S/R} = U_{S/R} \not = \emptyset.$$
Furthermore, by Lemma \ref{max ideal}, the morphisms $\iota_{S/\hat{Z}}$ and $\iota_{S/R}$ are surjective.\footnote{The fact that $S$ is a depiction of $R$ also follows from \cite[Theorem D.1]{B5}, since the algebra homomorphism $\tau_{\psi}: \Lambda \to M_{|Q_0|}(B)$ is an impression \cite[Theorem 5.9.1]{B2}.}

(2.i) As schemes, $\operatorname{Spec}S$ is isomorphic to $\operatorname{Spec}\hat{Z}$ and $\operatorname{Spec}R$ on the open dense subset $U_{S/\hat{Z}} = U_{S/R}$, by Theorem \ref{isolated sing}.
Thus all three schemes are birationally equivalent.  
Furthermore, $\operatorname{Spec}\hat{Z}$ and $\operatorname{Spec}R$ each contain precisely one closed point where $\hat{Z}$ and $R$ are locally nonnoetherian, namely $\mathfrak{z}_0$ and $\mathfrak{m}_0$, again by Theorem \ref{isolated sing}.

(2.ii.a) We claim that the closed point $\mathfrak{m}_0 \in \operatorname{Spec}R$ has positive geometric dimension.

Indeed, since $A$ is nonnoetherian, there is a cycle $p$ such that $\sigma \nmid \overbar{p}$ and $\overbar{p}^n \in S \setminus R$ for each $n \geq 1$, by Lemma \ref{cyclelemma2}.6. 

If $\overbar{p}$ is in $\mathfrak{m}_0S$, then there are monomials $g \in R$, $h \in S$, such that $gh = \overbar{p}$.
Furthermore, $g \not = \sigma^n$ for all $n \geq 1$ since $\sigma \nmid \overbar{p}$.
But then $\overbar{p} = gh$ is in $R$ by Lemma \ref{cyclelemma2}.1, a contradiction.
Therefore
$$\overbar{p} \not \in \mathfrak{m}_0S.$$

Consequently, for each $c \in k$, there is a maximal ideal $\mathfrak{n}_c \in \operatorname{Max}S$ such that
$$(\overbar{p} -c, \mathfrak{m}_0)S \subseteq \mathfrak{n}_c.$$
Thus,
$$\mathfrak{m}_0 \subseteq (\overbar{p} -c, \mathfrak{m}_0)S \cap R \subseteq \mathfrak{n}_c \cap R.$$
Whence $\mathfrak{n}_c \cap R = \mathfrak{m}_0$ since $\mathfrak{m}_0$ is maximal.
Therefore by Theorem \ref{isolated sing},
$$\mathfrak{n}_c \in U^c_{S/R}.$$

Set
\begin{equation} \label{yet}
\mathfrak{q} := \bigcap_{c \in k} \mathfrak{n}_c.
\end{equation}
The intersection of radical ideals is radical, and so $\mathfrak{q}$ is a radical ideal.
Thus, since $S$ is noetherian, the Lasker-Noether theorem implies that there are minimal primes $\mathfrak{q}_1, \ldots, \mathfrak{q}_{\ell} \in \operatorname{Spec}S$ over $\mathfrak{q}$ such that
\begin{equation} \label{not}
\mathfrak{q} = \mathfrak{q}_1 \cap \cdots \cap \mathfrak{q}_{\ell}.
\end{equation}

We claim that at least one $\mathfrak{q}_i$ is non-maximal in $S$.
Setting $\mathcal{Z}(I) := \mathcal{Z}_S(I)$, we have
$$\cup_{i = 1}^{\ell}\mathcal{Z}(\mathfrak{q}_i) = \mathcal{Z}(\cap_{i =1}^{\ell} \mathfrak{q}_i) \stackrel{\textsc{(i)}}{=} \mathcal{Z}(\mathfrak{q}) \stackrel{\textsc{(ii)}}{=} \mathcal{Z}(\cap_{c \in k} \mathfrak{n}_c) = \cup_{c \in k} \mathcal{Z}(\mathfrak{n}_c),$$
where (\textsc{i}) holds by (\ref{not}), and (\textsc{ii}) holds by (\ref{yet}).
Thus, if each $\mathfrak{q}_i$ were maximal, then 
$$\{ \mathcal{Z}(\mathfrak{q}_1), \ldots, \mathcal{Z}(\mathfrak{q}_{\ell}) \} = \{ \mathcal{Z}(\mathfrak{n}_c) : c \in k \}.$$
In particular, there there would be an infinite set of distinct zero-dimensional points that is equal to a finite set of zero-dimensional points, which is not possible.

Therefore at least one $\mathfrak{q}_i$ is a non-maximal prime, say $\mathfrak{q}_1$.
Whence,
$$\mathfrak{m}_0 = \bigcap_{c \in k} (\mathfrak{n}_c \cap R) = \bigcap_{c \in k} \mathfrak{n}_c \cap R = \mathfrak{q} \cap R \subseteq \mathfrak{q}_1 \cap R.$$
Consequently, $\mathfrak{q}_1 \cap R = \mathfrak{m}_0$ since $\mathfrak{m}_0$ is maximal in $R$.

Since $\mathfrak{q}_1$ is a non-maximal prime ideal of $S$,
$$\operatorname{ht}(\mathfrak{q}_1) < \operatorname{dim}S.$$
Furthermore, $S$ is a depiction of $R$ by Claim (1).
Thus
$$\operatorname{ght}(\mathfrak{m}_0) \leq \operatorname{ht}(\mathfrak{q}_1) < \operatorname{dim}S \stackrel{\textsc{(i)}}{=} \operatorname{dim}R,$$
where (\textsc{i}) holds by Theorem \ref{generically noetherian}.
Therefore
$$\operatorname{gdim} \mathfrak{m}_0 = \operatorname{dim}R - \operatorname{ght}(\mathfrak{m}_0) \geq 1,$$
proving our claim.

(2.ii.b) Finally, we claim that the closed point $\mathfrak{z}_0 \in \operatorname{Spec}\hat{Z}$ has positive geometric dimension.

Again, since $A$ is nonnoetherian, there is a cycle $p$ such that $\sigma \nmid \overbar{p}$ and $\overbar{p} \in S \setminus R$, by Lemma \ref{cyclelemma2}.6. 

If $\overbar{p}$ is in $\mathfrak{z}_0S$, then there are monomials $g \in \hat{Z}$, $h \in S$, such that $gh = \overbar{p}$.
Furthermore, $g \in R$ since $\hat{Z} \subseteq R$ by (\ref{ZKA}); and $g \not = \sigma^n$ for all $n \geq 1$ since $\sigma \nmid \overbar{p}$.
But then $\overbar{p} = gh$ is in $R$ by Lemma \ref{cyclelemma2}.1, a contradiction. 
Therefore
$$\overbar{p} \not \in \mathfrak{z}_0S.$$

The proof then follows as in Claim (2.ii.a).
\end{proof}

\begin{Remark} \rm{
Although $\hat{Z}$ and $R$ determine the same variety using depictions, their associated affine schemes
$$(\operatorname{Spec}\hat{Z}, \mathcal{O}_{\hat{Z}}) \ \ \ \text{ and } \ \ \ \left( \operatorname{Spec}R, \mathcal{O}_{R} \right)$$
will not be isomorphic if their rings of global sections, $\hat{Z}$ and $R$, are not isomorphic.
}\end{Remark}

\ \\
\textbf{Acknowledgments.}
The author would like to thank an anonymous referee for their helpful comments which have improved the article.
The author was supported by the Austrian Science Fund (FWF) grant P 30549-N26.
Part of this article is based on work supported by the Heilbronn Institute for Mathematical Research.

\bibliographystyle{hep}
\def\cprime{$'$} \def\cprime{$'$}

\end{document}